\newtheorem{thm}{Theorem}[section]
\newtheorem{lem}[thm]{Lemma}
\newtheorem{defin}[thm]{Definition}
\newtheorem{cor}[thm]{Corollary}
\newtheorem{que}[thm]{Open Question}
\begin{document}

\title{Models of an Abstract Elementary Class as a Generalized Polish Space}

\author{Georgios Marangelis}
\email{geormara@math.auth.gr}
\address{Department of Mathematics, Aristotle University of Thessaloniki, Greece}

\keywords{Model Theory, Descriptive Set Theory, Topology, Generalized Polish Spaces, Abstract Elementary Classes, Presentation Theorem, Infinitary Logic}
\subjclass[2010]{Primary 03C48, 03E15; Secondary 03C75, 54E52, 54H05}

\thanks{This paper is part of the author's Ph.D. Thesis at the Department of Mathematics of Aristotle University in Thessaloniki, Greece, under the guidance of Dr. Ioannis Souldatos.}

\begin{abstract}
    In first order logic, it is known that you can define a topology so that the countable models of some theory $T$ form a Polish Space (i.e. completely metrizable second countable space).

    In this paper we use the Baldwin- Boney Relational Presentation Theorem (from \cite{PrTh}; cf. \ref{prth}) to generalize this result to the models of an Abstract Elementary Class (AEC). More specifically, we define a topology on the models of an AEC of size $\lambda \geq \kappa$, where $\kappa$ is the L$\ddot{o}$wenheim-Skolem number and $\lambda$ has to satisfy a set-theoretic assumption (see Section \ref{sectiontopologyAEC}) and prove that these models form a Generalized Polish Space (i.e. a generalization of Polish Spaces i.e. completely $G$-metrizable space with weight $\leq \kappa$). 

\end{abstract}

\maketitle

\section{Introduction}
\subsection{Countable Models of First-Order Theories}
~

First we survey some results about the topological space of countable models of a first-order theory $T$.

\begin{defin}\label{Polish}
    A topological space $X$ is called $\mathbf{Polish}$ if it is completely metrizable and second countable. 
\end{defin}

In first order logic, if $T$ is a theory, we can define a topology on the countable models of $T$ and prove that it is a Polish Space. In fact, we can define two different topologies on such space and prove that both make it a Polish Space. 

\begin{defin}\label{topology1}
Let $\mathcal{L}$ be some countable language and let $Mod_{\omega}(\mathcal{L})$ be the space of all $\mathcal{L}$-structures on $\omega$ (since all of our models are countable, we can think that their universe is always $\omega$) equipped with the $\mathbf{topology}$ whose basic open sets are of the following form:

\begin{center}
    $\{ M | M \models \phi(n_{1}, n_{2}, ..., n_{k}) \}$
\end{center}

for $k, n_{1}, ..., n_{k} \in \omega$ and $\phi$ a quantifier-free first order formula.
\end{defin}

\begin{defin}\label{topology2}
    One can define a slightly different topology from the one above. More specifically, we can define the basic open sets to be of the form:

\begin{center}
    $\{ M | M \models \phi(n_{1}, n_{2}, ..., n_{k}) \}$
\end{center}

for $k, n_{1}, ..., n_{k} \in \omega$ and $\phi$ a first order formula.

\end{defin}

We include the next Theorem without proof. A full proof can be found in \cite{Hjorth}.

\begin{thm}\label{firstorderPolish}
    The space $Mod_{\omega}(\mathcal{L})$ equipped either with the topology defined in Definition \ref{topology1}, or with the topology defined in Definition \ref{topology2} is a Polish Space.
\end{thm}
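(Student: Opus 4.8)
The plan is to realize $Mod_{\omega}(\mathcal{L})$ as a countable product of standard Polish spaces and then match each of the two topologies against that product. Since $\mathcal{L}$ is countable, an $\mathcal{L}$-structure with universe $\omega$ is completely determined by the interpretations of its symbols: each $k$-ary relation symbol $R$ contributes a point of $2^{\omega^{k}}$ (the characteristic function of $R^{M} \subseteq \omega^{k}$), each $k$-ary function symbol $f$ contributes a point of $\omega^{\omega^{k}}$, and each constant symbol contributes a point of $\omega$. Identifying $M$ with this tuple of interpretations gives a bijection
$$ Mod_{\omega}(\mathcal{L}) \;\cong\; \prod_{R} 2^{\omega^{k_{R}}} \times \prod_{f} \omega^{\omega^{k_{f}}} \times \prod_{c} \omega, $$
a countable product of copies of the Cantor space, the Baire space, and the discrete space $\omega$. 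Each factor is Polish and a countable product of Polish spaces is Polish, so the resulting product topology, which I will call $\tau_{p}$, makes $Mod_{\omega}(\mathcal{L})$ a Polish space.

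First I would show that the topology $\tau_{1}$ of Definition \ref{topology1} coincides with $\tau_{p}$. The subbasic open sets of $\tau_{p}$ fix a single coordinate: for a relation they have the form $\{M : M \models R(\bar{n})\}$ or its complement, for a function $\{M : M \models f(\bar{n}) = m\}$, and for a constant $\{M : M \models c = m\}$. All of these are defined by atomic, hence quantifier-free, formulas with parameters from $\omega$, so $\tau_{p} \subseteq \tau_{1}$. Conversely, I would argue by induction on quantifier-free formulas that each $\{M : M \models \phi(\bar{n})\}$ is clopen in $\tau_{p}$: the only delicate point is that a closed term $t(\bar{n})$ built from function symbols has a value determined by finitely — but variably — many coordinates, so one writes $\{M : t(\bar{n})^{M} = m\}$ as a countable union over the possible values of its subterms and checks that this set and its complement are both open; negation, conjunction, and disjunction are then handled by complement, finite intersection, and finite union. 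This yields $\tau_{1} \subseteq \tau_{p}$, hence $\tau_{1} = \tau_{p}$ is Polish. One may instead pass to a relational language by replacing each function and constant symbol by its graph, reducing the product to Cantor spaces and making this step entirely routine.

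For the topology $\tau_{2}$ of Definition \ref{topology2} the key observation is that every first-order formula defines a Borel set in $\tau_{p}$. I would prove this by induction: quantifier-free formulas give clopen sets by the previous step, and a quantifier becomes a countable set operation, since
$$ \{M : M \models \exists x\, \phi(x, \bar{n})\} = \bigcup_{m \in \omega} \{M : M \models \phi(m, \bar{n})\}, $$
and dually for $\forall$, so Borelness is preserved. The collection $\mathcal{A} = \{\,\{M : M \models \phi(\bar{n})\} : \phi \text{ first-order},\ \bar{n} \in \omega^{<\omega}\,\}$ is therefore a countable family of $\tau_{p}$-Borel sets, and it is closed under complementation (negation) and finite intersection (conjunction), so it is a base for $\tau_{2}$. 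Invoking the standard descriptive-set-theoretic refinement theorem — a Polish topology refined by countably many Borel sets made clopen remains Polish — the topology generated by $\tau_{p} \cup \mathcal{A}$ is Polish; and since $\tau_{p} \subseteq \tau_{2}$ (quantifier-free formulas are first-order) while $\mathcal{A} \subseteq \tau_{2}$, this generated topology is exactly $\tau_{2}$, which is therefore Polish.

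The main obstacle I anticipate is this last step: one must confirm that the refinement theorem applies to the countable family $\mathcal{A}$ and, crucially, that the Polish topology it produces is \emph{exactly} $\tau_{2}$ rather than some strictly larger refinement — this is precisely where closure of $\mathcal{A}$ under the Boolean operations and the inclusion $\tau_{p} \subseteq \tau_{2}$ pin down the identification. A secondary nuisance is the careful bookkeeping in the quantifier-free case when function symbols are present, which is exactly why the reduction to a relational language is attractive.
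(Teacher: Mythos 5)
Your treatment of the topology of Definition \ref{topology1} is correct and standard: identifying a structure with the tuple of interpretations of its symbols realizes $Mod_{\omega}(\mathcal{L})$ as a countable product of copies of $2^{\omega^{k}}$, $\omega^{\omega^{k}}$ and $\omega$, and your induction on quantifier-free formulas (with the countable union over values of subterms to handle closed terms) correctly identifies that product topology with the topology of Definition \ref{topology1}.

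The second half, however, has a genuine gap, and it is exactly at the point you flag. The change-of-topology theorem you invoke produces \emph{some} finer Polish topology in which countably many prescribed Borel sets become clopen, but that topology is built by a transfinite construction along the Borel hierarchy and is in general strictly finer than the topology generated by $\tau_{p} \cup \mathcal{A}$; and the latter need not be Polish even when $\mathcal{A}$ is a countable Boolean algebra of Borel sets containing a basis of $\tau_{p}$. A concrete counterexample: on $\mathbb{R}$, let $\mathcal{A}$ be the Boolean algebra generated by the rational intervals together with the ($F_{\sigma}$, hence Borel) set $\mathbb{Q}$. The topology generated by $\mathcal{A}$ refines the usual one and makes $\mathbb{Q}$ a clopen subspace carrying its usual subspace topology, which is not Polish (it violates the Baire category theorem); since an open subspace of a Polish space is Polish, the generated topology is not Polish. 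So closure of $\mathcal{A}$ under Boolean operations together with $\tau_{p} \subseteq \tau_{2}$ cannot ``pin down'' Polishness, and some additional structural feature of the family of first-order definable sets must be used. That feature is the witness property $\{M : M \models \exists x\, \phi(x,\bar{n})\} = \bigcup_{m} \{M : M \models \phi(m,\bar{n})\}$, and the proof in the cited source \cite{Hjorth} --- which is also the argument this paper runs at the level of $\kappa$ in the proof of Theorem \ref{THMtopology} --- exploits it by adding constants $c_{n}$ for $n \in \omega$, letting $S$ be the resulting set of sentences, and showing that $M \mapsto T_{M}$ is a homeomorphism of $(Mod_{\omega}(\mathcal{L}), \tau_{2})$ onto the subset of $2^{S}$ cut out by finite consistency, negation-completeness, and the Henkin witness condition for $\exists$; each of these conditions is $G_{\delta}$, and a $G_{\delta}$ subset of the Polish space $2^{S}$ is Polish. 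Your final paragraph should be replaced by (or reduced to) this embedding argument.
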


We can also define a similar topology for $\mathcal{L}_{\omega_{1}, \omega}$-formulas, as follows:

\begin{defin}\label{topology3}
    For $\mathcal{F} \subseteq \mathcal{L}_{\omega_{1}, \omega}$ a fragment, we define $T_{\mathcal{F}}$ to be the topology on $Mod_{\omega}(\mathcal{L})$ generated by open sets of the form:

    \begin{center}
        $\{ M \in Mod_{\omega}(\mathcal{L}) | M \models \phi(n_{1}, n_{2}, ..., n_{k})\}$,
    \end{center}

    where $k, n_{1}, ..., n_{k} \in \omega$ and $\phi(\bar{x}) \in \mathcal{F}$.
\end{defin}

\begin{thm}\label{InfinitaryPolish}
    If $\mathcal{F}$ is a countable fragment of $\mathcal{L}_{\omega_{1}, \omega}$, then $(Mod_{\omega}(\mathcal{L}), T_{\mathcal{F}})$ is a Polish Space.
\end{thm}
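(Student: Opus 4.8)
The plan is to verify the two defining properties of a Polish space separately, treating second countability as immediate and concentrating the work on complete metrizability, which I would obtain by realizing $T_{\mathcal{F}}$ as a well-behaved refinement of the Polish topology of Definition \ref{topology1}. For second countability, note that $\mathcal{F}$ is countable and $\omega^{<\omega}$ is countable, so there are only countably many sets of the form $\{M : M \models \phi(\bar{n})\}$ with $\phi(\bar{x}) \in \mathcal{F}$ and $\bar{n}$ a tuple from $\omega$. These sets form a subbasis for $T_{\mathcal{F}}$, and a topology with a countable subbasis has a countable basis (namely the finite intersections of subbasic sets). Hence $(Mod_{\omega}(\mathcal{L}), T_{\mathcal{F}})$ is second countable.

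For complete metrizability, I would first observe that $T_{\mathcal{F}}$ refines the topology $\tau_0$ of Definition \ref{topology1}: since $\mathcal{F}$ is a fragment it contains every atomic, and hence every quantifier-free, formula, so each subbasic open set of $\tau_0$ is already a subbasic open set of $T_{\mathcal{F}}$. By Theorem \ref{firstorderPolish}, $(Mod_{\omega}(\mathcal{L}), \tau_0)$ is Polish. Next, by induction on the complexity of $\phi \in \mathcal{F}$ I would show that for every tuple $\bar{n}$ the set $A_{\phi, \bar{n}} = \{M : M \models \phi(\bar{n})\}$ is Borel with respect to $\tau_0$: atomic formulas give clopen sets by the definition of $\tau_0$; negation corresponds to complementation; the countable conjunctions and disjunctions of $\mathcal{L}_{\omega_1, \omega}$ correspond to countable intersections and unions; and since the universe is $\omega$, the quantifiers $\exists x$ and $\forall x$ correspond to the countable union $\bigcup_{m \in \omega} A_{\psi, \bar{n}m}$ and the countable intersection $\bigcap_{m \in \omega} A_{\psi, \bar{n}m}$, respectively. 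Each of these operations preserves Borelness, so every $A_{\phi, \bar{n}}$ is Borel in $\tau_0$. Moreover, because a fragment is closed under negation, each $A_{\phi, \bar{n}}$ is in fact clopen in $T_{\mathcal{F}}$, its complement being $A_{\neg\phi, \bar{n}}$.

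Finally I would invoke the standard descriptive-set-theoretic fact (see Kechris, \emph{Classical Descriptive Set Theory}, \S 13) that if $(X, \tau)$ is Polish and $(B_i)_{i \in \omega}$ is a countable family of Borel subsets of $X$, then the topology generated by $\tau \cup \{B_i : i \in \omega\}$ is again Polish; indeed one may make each $B_i$ clopen while keeping the space Polish and preserving the Borel $\sigma$-algebra. Enumerating the countably many sets $A_{\phi, \bar{n}}$ as $(B_i)$ and applying this to $\tau_0$, the generated topology is Polish, and by the previous paragraph this generated topology is exactly $T_{\mathcal{F}}$. Combined with second countability, this shows $(Mod_{\omega}(\mathcal{L}), T_{\mathcal{F}})$ is Polish. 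I expect the main obstacle to be the complete-metrizability half: the Borelness induction relies on every logical operation available in $\mathcal{L}_{\omega_1, \omega}$ staying within the Borel hierarchy, which is precisely where the countability of the conjunctions and disjunctions and of the universe $\omega$ is essential, and one must then pass from ``Borel'' to ``Polish'' via the topology-refinement theorem rather than by exhibiting a complete metric by hand.
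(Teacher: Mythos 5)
Your decomposition into second countability (easy) plus complete metrizability (the real work) is reasonable, and both the second-countability half and the Borel-by-induction computation are correct. The problem is the final step. The fact you invoke is not what Kechris, \S 13 actually proves: given a Polish topology $\tau$ and countably many Borel sets $B_i$, there \emph{exists} a finer Polish topology with the same Borel sets in which every $B_i$ is clopen, but that topology is in general strictly finer than the topology generated by $\tau \cup \{B_i : i \in \omega\}$, and the generated topology need not be Polish. Concretely, let $Q$ be a countable dense subset of $2^{\omega}$ and let $(B_i)$ enumerate the Boolean algebra generated by the clopen sets together with $Q$; in the topology generated by $\tau \cup \{B_i : i\in\omega\}$ the set $Q$ is open and inherits its original subspace topology, which is homeomorphic to $\mathbb{Q}$ and hence not Polish, so the generated topology is not Polish even though every $B_i$ is Borel (indeed clopen in the new topology). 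Since your argument must show that $T_{\mathcal{F}}$ \emph{itself} is Polish, not merely that some finer topology is, this is a genuine gap, and it sits exactly where the logical structure of $\mathcal{F}$ has to be exploited rather than just the Borelness of the sets $A_{\phi,\bar{n}}$.

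The gap is repairable, and the repair turns your sketch into the standard change-of-topology proof (Becker--Kechris, Gao): process the pairs $(\phi,\bar{n})$ along the well-founded subformula hierarchy and observe that at each stage the only set you need to declare open is already \emph{closed} in the topology built so far --- for $\phi=\bigvee_{n}\psi_{n}$ or $\phi=\exists x\,\psi$ the set $A_{\phi,\bar{n}}$ is a countable union of sets made clopen earlier (here the countability of the universe and of the conjunctions enters, as you note), so only its complement $A_{\neg\phi,\bar{n}}$, a closed set, must be added. Adjoining a single closed set as an open set preserves Polishness (Kechris 13.1), and the topology generated by a countable increasing chain of Polish topologies inside one Borel structure is Polish (Kechris 13.3); the resulting topology is exactly $T_{\mathcal{F}}$. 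Once repaired, your route is genuinely different from the one this paper uses for the generalization in Theorem \ref{THMtopology}: there the space of models is embedded into $2^{S}$ via $M \mapsto T_{M}$ and the image is identified with a $G^{\kappa}_{\delta}$ set of complete Henkin theories. The change-of-topology machinery you rely on is essentially tied to countable unions and metrizability and does not survive the passage to uncountable $\kappa$, which is precisely why the Henkin-theory embedding is the approach that generalizes.
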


We will see some Theorems relevant to the topologies we defined, without referring to their proofs. In \cite{InfinitaryMarker} is presented a proof of Morley's Theorem using the topology we defined with the quantifier-free formulas.

\begin{thm}\label{Morley}
    (Morley) Let $\phi$ be an $\mathcal{L}_{\omega_{1}, \omega}$ sentence. The number of non-isomorphic countable models of $\phi$ is either at most $\aleph_{1}$ or exactly $2^{\aleph_{0}}$.
\end{thm}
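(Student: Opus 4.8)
The plan is to realize the countable models of $\phi$ as the points of a Polish space, to identify isomorphism with the orbit equivalence relation of a canonical Polish group action, and then to invoke the dichotomy for analytic equivalence relations. First I would fix a countable fragment $\mathcal{F} \subseteq \mathcal{L}_{\omega_1,\omega}$ with $\phi \in \mathcal{F}$, so that by Theorem \ref{InfinitaryPolish} the space $(Mod_\omega(\mathcal{L}), T_{\mathcal{F}})$ is Polish. Because $\phi$ is itself a formula of $\mathcal{F}$, the set $X_\phi = \{M : M \models \phi\}$ is a basic open set and $\{M : M \models \neg\phi\}$ is its complement; hence $X_\phi$ is clopen, and as a closed subspace of a Polish space it is itself Polish.

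Next I would bring in the logic action. The symmetric group $S_\infty = \mathrm{Sym}(\omega)$, a Polish group under pointwise convergence, acts on $Mod_\omega(\mathcal{L})$ by transporting the interpretations of the symbols along a permutation of the universe $\omega$. Two structures lie in the same orbit precisely when they are isomorphic, and since $\phi$ is a sentence the set $X_\phi$ is invariant under the action. Thus the number of countable models of $\phi$ up to isomorphism is exactly the number of orbits of $S_\infty$ acting on $X_\phi$. The induced orbit equivalence relation $E$ is analytic: $M\,E\,N$ holds iff $\exists g \in S_\infty\,(g \cdot M = N)$, i.e.\ a projection along the Polish space $S_\infty$ of a Borel (indeed closed) condition, which is a $\Sigma^1_1$ relation.

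The conclusion then follows from the Burgess dichotomy for analytic equivalence relations: every $\Sigma^1_1$ equivalence relation on a Polish space has either at most $\aleph_1$ classes or a perfect set of pairwise inequivalent points, and hence in the latter case exactly $2^{\aleph_0}$ classes. Applying this to $E$ on $X_\phi$ yields precisely Morley's alternative. I expect the single substantial ingredient to be the Burgess dichotomy itself --- the analytic (rather than coanalytic) strengthening of the Silver dichotomy --- whose proof rests on either effective descriptive set theory via the Gandy--Harrington topology or a metamathematical absoluteness argument, and it is exactly this step that forces the bound $\aleph_1$ rather than $\aleph_0$. By comparison, verifying that $X_\phi$ is Polish, that orbits coincide with isomorphism classes, and that $E$ is analytic is routine. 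As a variant matching the approach cited in \cite{InfinitaryMarker}, one could instead work throughout in the coarser quantifier-free topology of Theorem \ref{firstorderPolish}, in which $X_\phi$ is no longer clopen but is still an invariant Borel set; refining that topology to make $X_\phi$ clopen while remaining Polish returns us to the same setting and the same application of the dichotomy.
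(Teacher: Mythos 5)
Your argument is correct as it stands: fixing a countable fragment $\mathcal{F}$ containing $\phi$ makes $X_\phi$ clopen, hence Polish, in $T_{\mathcal{F}}$ (Theorem \ref{InfinitaryPolish}); the logic action of $S_\infty$ identifies isomorphism with an analytic orbit equivalence relation on $X_\phi$; and the Burgess dichotomy then yields exactly the alternative ``at most $\aleph_1$ classes, or a perfect set of pairwise inequivalent points and hence exactly $2^{\aleph_0}$ classes.'' Be aware, however, that the paper itself offers no proof of this theorem --- it is stated without proof, with a pointer to \cite{InfinitaryMarker} for an argument using the quantifier-free topology of Definition \ref{topology1}. That argument (essentially Morley's original proof recast topologically) is more elementary than yours: instead of invoking Burgess's dichotomy for arbitrary $\Sigma^1_1$ equivalence relations, it runs a Scott-style analysis along an increasing $\omega_1$-chain of countable fragments; either every stage contributes only countably many complete fragment-theories of models of $\phi$, in which case a Scott-isomorphism-type argument bounds the count by $\aleph_1$, or at some stage the (analytic) set of realized complete theories is uncountable, and the perfect set theorem for analytic sets produces $2^{\aleph_0}$ pairwise non-isomorphic models. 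So the only nontrivial descriptive-set-theoretic input there is the perfect set property for analytic sets, whereas you use the full Burgess dichotomy, a strictly stronger tool whose proof (Gandy--Harrington topology or an absoluteness argument) subsumes that machinery. Your route is shorter and is the standard modern treatment; the cited route is self-contained at the level of classical descriptive set theory and makes visible where the bound $\aleph_1$ comes from, namely the length $\omega_1$ of the Scott analysis. Both are valid proofs of the statement.
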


\begin{defin}\label{meager}

\begin{enumerate}
    \item A subset of a topological space $X$ is $\mathbf{nowhere}$ $\mathbf{dense}$, if its intersection with any non-empty open subset of $X$ is not dense.
    \item A subset of $X$ is called $\mathbf{meager}$ if it can be written as the countable union of closed nowhere dense sets.
    \item The complement of a meager set is called $\mathbf{comeager}$.
\end{enumerate}

\end{defin}

In \cite{Hjorth} we find the next results (which also hold in the first order case):

\begin{lem}\label{Hjorth1}
    Let $\mathcal{F} \subseteq \mathcal{L}_{\omega_{1}, \omega}$ be a countable fragment. Let $\Sigma(x_{1}, x_{2}, ..., x_{n})$ be a complete non-principal type over $\mathcal{F}$ and let $k_{1}, k_{2}, ..., k_{n} \in \omega$. Let $T \subseteq \mathcal{F}$ be a complete theory. Then the set:

    \begin{center}
        $\{M \in Mod_{\omega}(T) | M \models \Sigma(k_{1}, ..., k_{n}) \}$
    \end{center}

    is closed and nowhere dense (for the topology $T_{\mathcal{F}}$).
\end{lem}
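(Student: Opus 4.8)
The plan is to prove closedness and emptiness of the interior separately; since the set is closed, empty interior is exactly nowhere density. Write $A=\{M\in Mod_{\omega}(T)\mid M\models\Sigma(k_{1},\ldots,k_{n})\}$ and $\bar{k}=(k_{1},\ldots,k_{n})$.

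For closedness I would exhibit the complement of $A$ (inside $Mod_{\omega}(T)$) as a union of basic open sets. A model lies outside $A$ exactly when $M\not\models\sigma(\bar{k})$ for some $\sigma\in\Sigma$, and for a single structure $M\not\models\sigma(\bar{k})$ is the same statement as $M\models\neg\sigma(\bar{k})$. Since $\mathcal{F}$ is a fragment it is closed under negation, so each $\neg\sigma\in\mathcal{F}$ and each $\{M\mid M\models\neg\sigma(\bar{k})\}$ is basic open; hence the complement $\bigcup_{\sigma\in\Sigma}\{M\mid M\models\neg\sigma(\bar{k})\}$ is open and $A$ is closed. (Completeness of $\Sigma$ is not needed for this part.)

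For nowhere density it then suffices to show $A$ has empty interior, i.e. that no nonempty open set is contained in $A$; equivalently, that every nonempty basic open $V$ satisfies $V\setminus A\neq\emptyset$. Because $\mathcal{F}$ is closed under finite conjunctions, I may take $V=\{M\in Mod_{\omega}(T)\mid M\models\theta(\bar{b})\}$ for a single $\theta\in\mathcal{F}$ and a tuple $\bar{b}$ from $\omega$. Replacing the occurrences of the distinguished parameters $k_{1},\ldots,k_{n}$ by variables $x_{1},\ldots,x_{n}$ and the remaining parameters by variables $\bar{z}$ (and folding in the relevant distinctness clauses), I rewrite this as $V=\{M\mid M\models\chi(\bar{k},\bar{c})\}$, where $\chi(\bar{x},\bar{z})\in\mathcal{F}$ and $\bar{c}$ is a tuple from $\omega$ disjoint from $\bar{k}$. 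Set $\chi^{*}(\bar{x}):=\exists\bar{z}\,\chi(\bar{x},\bar{z})$, which lies in $\mathcal{F}$ since fragments are closed under finite existential quantification, and has its free variables among $\bar{x}$. As $\Sigma$ is a complete type over $\mathcal{F}$ in the variables $\bar{x}$, either $\neg\chi^{*}\in\Sigma$ or $\chi^{*}\in\Sigma$. In the first case every $M\in A$ satisfies $\neg\chi^{*}(\bar{k})$, hence fails $\chi(\bar{k},\bar{c})$, so $A\cap V=\emptyset$ and $V\not\subseteq A$ because $V\neq\emptyset$. In the second case I invoke non-principality: since $\chi^{*}\in\Sigma$ cannot isolate $\Sigma$, there is $\sigma\in\Sigma$ with $T\cup\{\chi^{*}(\bar{x})\wedge\neg\sigma(\bar{x})\}$ consistent. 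Choosing a countable model $N\models T$ with a tuple $\bar{e}$ realizing $\chi^{*}\wedge\neg\sigma$, unpacking $\chi^{*}$ to obtain witnesses $\bar{f}$ with $N\models\chi(\bar{e},\bar{f})$, and relabeling the universe by a bijection of $\omega$ sending $\bar{e}\mapsto\bar{k}$ and $\bar{f}\mapsto\bar{c}$, I get $M\in Mod_{\omega}(T)$ with $M\models\chi(\bar{k},\bar{c})$, so $M\in V$, and $M\models\neg\sigma(\bar{k})$, so $M\notin A$. Thus $V\not\subseteq A$ in either case.

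The main obstacle I expect is the bookkeeping around parameters and universes rather than any deep idea: one must carefully separate the fixed realization indices $\bar{k}$ from the other parameters $\bar{c}$ and package all needed distinctness so that the finite partial map $\bar{e}\bar{f}\mapsto\bar{k}\bar{c}$ genuinely extends to a permutation of $\omega$. This in turn rests on the convention that our models are countably infinite together with downward Löwenheim--Skolem for the countable fragment $\mathcal{F}$ (to replace a possibly uncountable witnessing model by one on $\omega$), and on the fact that a complete $\loo$-theory is preserved under isomorphism, so the relabeled structure indeed remains in $Mod_{\omega}(T)$.
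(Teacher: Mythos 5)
Your argument is correct and is the standard Baire-category proof of this fact; note that the paper itself states the lemma without proof, citing Hjorth, and your argument follows that same route (closedness via negations of formulas in the fragment; empty interior via the dichotomy on whether $\exists\bar{z}\,\chi\in\Sigma$, with non-principality supplying the witness model in the second case). The parameter/relabeling bookkeeping you flag is the only delicate point, and your handling of it (matching equality patterns so the finite partial map extends to a permutation of $\omega$) is exactly what is needed.
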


\begin{cor}\label{Hjorth2}
    (Omitting Types Theorem) Let $\mathcal{F}, T$ and $\Sigma(\bar{x})$ as in the above Lemma. Then the set

    \begin{center}
        $\{ M \in Mod_{\omega}(T) | M$ omits $\Sigma(\bar{x})\}$ 
    \end{center}

    is comeager in $(Mod_{\omega}(T), T_{\mathcal{F}})$.
\end{cor}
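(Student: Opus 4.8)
The plan is to reduce the statement directly to Lemma \ref{Hjorth1} by rewriting ``$M$ omits $\Sigma(\bar{x})$'' as a countable condition. Recall that, by definition, a model $M$ omits the type $\Sigma(\bar{x})$ precisely when no tuple from $M$ realizes $\Sigma$. Since every model in $Mod_{\omega}(T)$ has universe $\omega$, the tuples of length $n$ are indexed by $\bar{k} = (k_1, \ldots, k_n) \in \omega^n$, and so $M$ omits $\Sigma$ if and only if $M \not\models \Sigma(k_1, \ldots, k_n)$ for every $\bar{k} \in \omega^n$. This is the key observation: omitting is the negation of realizing-at-some-tuple, and the tuples form a countable index set.

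Next I would package the realizing sets. For each fixed $\bar{k} \in \omega^n$, set
\[
R_{\bar{k}} = \{ M \in Mod_{\omega}(T) \mid M \models \Sigma(k_1, \ldots, k_n) \}.
\]
By Lemma \ref{Hjorth1}, each $R_{\bar{k}}$ is closed and nowhere dense in $(Mod_{\omega}(T), T_{\mathcal{F}})$, since $\Sigma$ is a complete non-principal type over $\mathcal{F}$ and $T$ is a complete theory. The set of models that realize $\Sigma$ somewhere is then exactly
\[
\bigcup_{\bar{k} \in \omega^n} R_{\bar{k}},
\]
and because $\omega^n$ is countable this is a countable union of closed nowhere dense sets, hence meager by Definition \ref{meager}.

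Finally I would conclude by taking complements. The set of models omitting $\Sigma$ is precisely the complement in $Mod_{\omega}(T)$ of the meager set $\bigcup_{\bar{k}} R_{\bar{k}}$, and the complement of a meager set is comeager, which is the claim. The only point requiring care is the unfolding of the definition of omitting into a universally quantified (over $\bar{k}$) statement and the verification that the relevant quantification ranges over a countable set; there is no genuine obstacle here, as all the topological content—the closedness and nowhere density of each $R_{\bar{k}}$—has already been established in Lemma \ref{Hjorth1}. Thus the corollary is essentially an application of the Baire-category bookkeeping that turns a countable family of small (nowhere dense) sets into a comeager complement.
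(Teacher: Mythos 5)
Your argument is correct and is exactly the intended derivation: the paper states this as a corollary of Lemma \ref{Hjorth1} without writing out the proof, and your decomposition of the realizing set as the countable union $\bigcup_{\bar{k} \in \omega^{n}} R_{\bar{k}}$ of closed nowhere dense sets, followed by complementation, is the standard Baire-category argument the corollary tacitly relies on. No gaps.
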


\begin{thm}\label{Hjorth3}
    Let $\mathcal{F}$ be a countable fragment of $\mathcal{L}_{\omega_{1}, \omega}$, $T \subseteq \mathcal{F}$ a complete theory and $M_{0} \in Mod_{\omega}(T)$. The set of $N \in (Mod_{\omega}(T), T_{\mathcal{F}})$ which are isomorphic to $M_{0}$, is comeager if and only if $M_{0}$ is an atomic model.
\end{thm}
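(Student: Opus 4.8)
The plan is to prove the two implications separately, leaning on the Omitting Types Theorem (Corollary \ref{Hjorth2}) together with two classical facts about a complete theory $T$ over a countable fragment: that any two countable atomic models are isomorphic (by a back-and-forth construction), and that $T$ has an atomic model exactly when the isolated types are dense in each Stone space $S_n(T)$. For the easy direction, suppose $M_0$ is \emph{not} atomic, so some tuple $\bar a \in M_0$ realizes a non-principal complete type $\Sigma(\bar x)$ over $\mathcal{F}$. Since realizing a type is invariant under isomorphism, every $N \cong M_0$ realizes $\Sigma$, so the isomorphism class $[M_0]$ is disjoint from $\{N \in Mod_\omega(T) : N \text{ omits } \Sigma\}$, which is comeager by Corollary \ref{Hjorth2}. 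Hence $[M_0]$ sits inside a meager set. As $Mod_\omega(T)$ is a nonempty closed subspace of the Polish space of Theorem \ref{InfinitaryPolish} (being $\bigcap_{\sigma \in T}\{M : M \models \sigma\}$, an intersection of clopen sets), it is itself Polish and therefore Baire, so a meager set cannot be comeager; thus $[M_0]$ is not comeager.

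For the converse, assume $M_0$ is atomic. First I would use uniqueness of countable atomic models to identify $[M_0]$ with the full set $\mathrm{At} = \{N \in Mod_\omega(T) : N \text{ atomic}\}$, and then show $\mathrm{At}$ is comeager by writing its complement as $\bigcup_{n \in \omega}\bigcup_{\bar k \in \omega^n} A_{\bar k}$, where $A_{\bar k} = \{N : \mathrm{tp}^N(\bar k) \text{ is non-principal}\}$, and proving each $A_{\bar k}$ closed and nowhere dense. Closedness is immediate: $\bar k$ realizes a principal type in $N$ precisely when $N \models \phi(\bar k)$ for some $\phi \in \mathcal{F}$ isolating a complete type, so the complement of $A_{\bar k}$ is a (countable) union of basic open sets. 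For nowhere density I would show that complement is dense. Given a nonempty basic open set $\{N : N \models \chi(\bar c)\}$, the formula $\chi$ is consistent with $T$; since $M_0$ is atomic the isolated types are dense in the relevant $S_m(T)$, so there is an isolated complete type $p$ containing $\chi$ (in the variables indexing the distinct entries of $\bar c$ and $\bar k$, which $p$ forces to be distinct). Realizing $p$ by a model on universe $\omega$ in which the entries of $\bar c$ and $\bar k$ serve as the witnesses produces an $N$ in the given open set; there $\bar k$ realizes the restriction of $p$ to its coordinates, which is again isolated (by $\exists$-quantifying the remaining variables of the isolating formula, using closure of $\mathcal{F}$ under quantification). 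Hence this $N$ lies in the complement of $A_{\bar k}$, so that complement is dense, $A_{\bar k}$ is nowhere dense, and $[M_0]=\mathrm{At}$ is comeager.

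I expect the main obstacle to be this converse direction, and specifically the fact that there may be uncountably many non-principal complete types over $\mathcal{F}$: one cannot simply intersect the comeager ``omitting'' sets from Corollary \ref{Hjorth2} over all such types, since an uncountable intersection of comeager sets need not be comeager. The decisive move is the reindexing above—organizing the non-atomic models by the countably many tuples $\bar k$ rather than by types—together with the invocation of density of the isolated types in $S_n(T)$, which is exactly the content supplied by the hypothesis that $M_0$ is atomic. Verifying the nowhere-density of each $A_{\bar k}$ (the density-of-isolated-types step and the relabeling onto the universe $\omega$) is the technical heart of the argument.
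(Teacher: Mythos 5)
The paper states Theorem \ref{Hjorth3} without proof, citing \cite{Hjorth}, so there is no in-paper argument to compare against; your proposal is the standard proof and I see no gaps in it. Both halves are sound: the non-atomic direction correctly combines Corollary \ref{Hjorth2} with the Baireness of $Mod_{\omega}(T)$ (as a closed, hence Polish, subspace), and the atomic direction correctly avoids the uncountable-intersection trap by reindexing over the countably many tuples $\bar{k}$ and using density of isolated types (which follows from the existence of the atomic model $M_0$ together with $\mathcal{F}$-completeness of $T$) to show each $A_{\bar{k}}$ is closed and nowhere dense.
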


\subsection{Abstract Elementary Classes}
S. Shelah introduced the notion of Abstract Elementary Classes (AEC) in the 1980's, in order to generalize some of his own results in first order logic. AEC's are a non-syntactic generalization of Elementary Classes in first order logic. The reader can consult \cite{Categoricity} for more details and proofs of the results presented here.

Shelah was also the first one who proved a Theorem that allows us to replace the entirely semantic description of AEC's by a syntactic one. This Theorem is known as "Shelah's Presentation Theorem".

\begin{thm}\label{Shelah'sPresentationTheorem}
    (Shelah's Presentation Theorem) Let $\tau$ be a vocabulary and $\mathbf{K}$ be an AEC in $\tau$ ($|\tau| \leq LS(\mathbf{K})$). Then there is a vocabulary $\tau' \supseteq \tau$ with cardinality $|\tau|$, a first order theory $T'$ and a set $\Gamma$ of at most $2^{LS(\mathbf{K})}$ partial types, such that:
    \begin{center}
        $\mathbf{K} = \{ M'|_{\tau} : M' \models T'$ and $M'$ omits $\Gamma \}$.
    \end{center}
    Moreover, the $\prec_{\mathbf{K}}$ relation satisfies the following:
    \begin{itemize}
        \item If $M'$ is a $\tau'$-substructure of $N'$, where $M', N' \models T'$ and omit $\Gamma$, then $M'|_{\tau} \prec_{\mathbf{K}} N'|_{\tau}$.
        \item If $M \prec_{\mathbf{K}} N$ there is an expansion of $N$ to a $\tau'$-structure such that $M$ is the universe of a $\tau'$-substructure of $N'$.
    \end{itemize}
\end{thm}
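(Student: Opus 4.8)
The plan is to adjoin to $\tau$ a supply of Skolem-type function symbols that, in every model, name a small $\prec_{\mathbf{K}}$-submodel generated by an arbitrary finite tuple, and then to split the two obligations ``the relevant reduct lies in $\mathbf{K}$'' and ``the generated submodels nest correctly'' between the first-order theory $T'$ (for whatever is first-order expressible) and the omitted types $\Gamma$ (for the genuinely non-elementary content). Writing $\kappa = LS(\mathbf{K})$, I would set $\tau' = \tau \cup \{F^n_i : n < \omega,\ i < \kappa\}$, where $F^n_i$ is an $n$-ary function symbol. Since there are $\kappa \cdot \aleph_0 = \kappa$ new symbols, $|\tau'| = |\tau| + \kappa = \kappa = LS(\mathbf{K})$.

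Next I would produce, for each $M \in \mathbf{K}$, a canonical expansion $M'$. By the L\"owenheim--Skolem axiom of an AEC, every finite tuple $\bar{a} \in {}^{n}M$ lies inside some $M_{\bar{a}} \prec_{\mathbf{K}} M$ with $|M_{\bar{a}}| \le \kappa$; I would choose these coherently, so that $\bar{a} \subseteq \bar{b}$ implies $M_{\bar{a}} \subseteq M_{\bar{b}}$ (arranged through an increasing-chain construction), and interpret $F^n_i(\bar{a})$, for $i < \kappa$, as an enumeration of $M_{\bar{a}}$ with the first coordinates arranged so that $\bar{a}$ itself appears. I would then let $T'$ collect the first-order universal axioms recording the structural facts satisfied by this expansion: that each generated set $\langle \bar{a}\rangle := \{F^n_i(\bar{a}) : i < \kappa\}$ is closed under every $F^m_j$ and under the $\tau$-functions (hence is a $\tau'$-substructure), that $\bar{a} \in \langle\bar{a}\rangle$, and the monotonicity clause that $\bar{b} \subseteq \langle\bar{a}\rangle$ forces $\langle\bar{b}\rangle \subseteq \langle\bar{a}\rangle$. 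Each of these is a first-order scheme of size $\le \kappa$.

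The heart of the argument is the choice of $\Gamma$, and here the key observation I would exploit is that since every element of $\langle\bar{a}\rangle$ is a term $F^n_i(\bar{a})$ applied to the finite tuple $\bar{a}$, the quantifier-free $\tau'$-type of $\bar{a}$ completely determines the isomorphism type of the $\kappa$-sized structure $\langle\bar{a}\rangle$. Hence the ``bad'' configurations---a generated $\langle\bar{a}\rangle$ whose $\tau$-reduct is not in $\mathbf{K}$, or a nested pair with $\langle\bar{a}\rangle \subseteq \langle\bar{b}\rangle$ as sets but $\langle\bar{a}\rangle|_\tau \not\prec_{\mathbf{K}} \langle\bar{b}\rangle|_\tau$---are each describable by a partial type in the finitely many variables $\bar{a}$ (an infinite set of quantifier-free formulas recording all atomic facts among the $F^n_i(\bar{a})$). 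I would let $\Gamma$ be the set of all such bad types; since there are at most $2^{\kappa}$ isomorphism types of $\kappa$-generated $\tau'$-structures, $|\Gamma| \le 2^{\kappa} = 2^{LS(\mathbf{K})}$, and a structure omits $\Gamma$ exactly when no finite tuple generates a bad configuration.

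Finally I would verify the two inclusions and the two $\prec_{\mathbf{K}}$-clauses. For one direction, each $M'$ built above satisfies $T'$ and omits $\Gamma$, since every piece is a genuine small submodel of $M$, hence in $\mathbf{K}$ and correctly nested, so $M = M'|_\tau$ is captured. For the converse, given $M' \models T'$ omitting $\Gamma$, the reduct $M'|_\tau$ is the union of the system $\{\langle\bar{a}\rangle|_\tau : \bar{a} \in {}^{<\omega}M'\}$, which by omission of $\Gamma$ consists of members of $\mathbf{K}$ and is $\prec_{\mathbf{K}}$-directed, so the AEC union axiom yields $M'|_\tau \in \mathbf{K}$; the same union argument, noting that $\langle\bar{a}\rangle$ is computed identically in any $\tau'$-extension, handles the substructure clause, and given $M \prec_{\mathbf{K}} N$ I would run the expansion of $N$ while choosing each $M_{\bar{a}}$ for $\bar{a}\in M$ inside $M$, making $M$ the universe of a $\tau'$-substructure. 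The main obstacle I expect is the coherent simultaneous bookkeeping: arranging the interpretations of the $F^n_i$ so that the generated sets are related not merely by $\subseteq$ but by $\prec_{\mathbf{K}}$ (so the coherence and union axioms genuinely apply), and then encoding this relative nesting faithfully into finitely-variabled types---this is exactly where the non-elementary character of $\mathbf{K}$ must be absorbed by omitting types, and making the match exact (no bad configuration survives omission, yet every genuine model expands to omit $\Gamma$) is the delicate step.
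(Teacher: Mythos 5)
The paper states Shelah's Presentation Theorem without proof (it points the reader to \cite{Categoricity}), so there is no in-paper argument to compare against. Measured against the standard proof, your proposal has the right architecture throughout: the Skolem-style symbols $F^n_i$ for $n<\omega$, $i<LS(\mathbf{K})$; the coherent canonical expansion obtained from the L\"owenheim--Skolem and coherence axioms; the key observation that the quantifier-free $\tau'$-type of a finite tuple determines the isomorphism type of the generated $\kappa$-sized structure; the resulting bound $|\Gamma|\le 2^{LS(\mathbf{K})}$; and the directed-union argument for the converse inclusion and for the two $\prec_{\mathbf{K}}$-clauses. This is exactly the standard route.

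The one genuine slip is the allocation of the closure and monotonicity conditions to $T'$. The assertion that $\langle\bar a\rangle=\{F^n_i(\bar a): i<\kappa\}$ is closed under $F^m_j$ says that $F^m_j(F^n_{i_1}(\bar a),\dots,F^n_{i_m}(\bar a))$ equals $F^n_i(\bar a)$ for \emph{some} $i<\kappa$, i.e.\ it is an infinitary disjunction $\bigvee_{i<\kappa}$, not a first-order universal scheme; the same goes for the clause that $\bar b\subseteq\langle\bar a\rangle$ forces $\langle\bar b\rangle\subseteq\langle\bar a\rangle$. These conditions cannot live in the first-order theory $T'$ and must themselves be absorbed into $\Gamma$ --- either by adding, for each relevant term $t$, the partial type $\{t(\bar x)\neq F^n_i(\bar x): i<\kappa\}$ to the omitted set, or more cleanly by taking $\Gamma$ to be the set of all quantifier-free $\tau'$-types of finite tuples that are \emph{not} realized in any canonical expansion of a member of $\mathbf{K}$ (with $T'$ essentially trivial). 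This repair stays entirely within your framework and does not disturb the count $2^{LS(\mathbf{K})}$, but as written the claim that ``each of these is a first-order scheme of size $\le\kappa$'' is false, already for $LS(\mathbf{K})=\aleph_0$ and certainly for uncountable $LS(\mathbf{K})$.
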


From then there have been proved many different "Presentation Theorems" for AEC's. All of them are constructed for a different purpose. In this paper, we use the Baldwin and Boney ``Relational Presentation Theorem" (see Section \ref{sectionPrTh}). Our purpose is to obtain the necessary syntactic tools and define a topology on the models of an AEC, of size $\lambda$, for $\lambda \geq LS(\mathbf{K})$. 

For our purposes, the Baldwin-Boney's  ``Relational Presentation Theorem" has certain advantages than Shelah's original theorem. First, every model in the AEC has a unique expansion in the expanded vocabulary and second, there is no set of partial types to omit. 

From the variety of Presentation Theorems, noteworthy is the theorem in \cite{Grossberg}, where Samson Leung improves the results of the ``Relational Presentation Theorem". One interesting part of Leung's theorem is that it keeps the original language. One can ask if this theorem can be used in the present paper instead of the relational presentation theorem, but this is something we have not examined.

\subsection{Generalized Descriptive Set Theory}

An important observation is that in this paper we will not necessarily define a topology on a class of countable models, but rather on a class of uncountable models. In the first-order case, we work with subspaces of $\omega^{\omega}$. In this paper, we work with subspaces of $\kappa^{\kappa}$, where $\kappa$ is some regular uncountable cardinal.  (see Sections \ref{sectiontopologyAEC}, \ref{sectionGentopologyAEC}). Since $\kappa^\kappa$ can not be a Polish Space, we work a proper generalization of this notion called $G$-Polish Space (see Section \ref{sectionGPS}).
The study of $G$-Polish spaces falls into Generalized Descriptive Set Theory and has been a very active field during the last decade, since it has connections with a lot of other fields in Mathematics. 

There are two ``natural" ways to generalize the notion of Polish Spaces, $\mathbf{strong}$ $\mathbf{\kappa}$-$\mathbf{Choquet}$ $\mathbf{spaces}$ (see \cite{GPS})  and $\mathbf{G}$-$\mathbf{Polish}$ $\mathbf{spaces}$ (see \cite{GPS} or Section \ref{sectionGPS}). The definitions are not equivalent in general, but in many cases we can equivalently use either of them. In our paper we use the definition of $G$-Polish Space, but we could have used  strong $\kappa$-Choquet spaces too. One can consult \cite{GPS,3GPS} and \cite{Baldwin} for more results in this area.

In this paper, we connect for the first time AEC's with Generalized Descriptive Set Theory. More specifically, we use the Baldwin-Boney Relational Presentation Theorem in order to obtain the necessary syntactic tools and define a topology on the models of size $LS(\mathbf{K})$ of an AEC $\mathbf{K}$. Then, working similarly to \cite{Hjorth}, we prove that these models form a Generalized Polish Space. Finally, in the last Section, we generalize this result to models of an AEC of size equal to $\lambda$, where $\lambda \geq LS(\mathbf{K}) = \kappa$, under the assumption that for $\mu= (\lambda + \kappa^{+})^{\kappa}$ it holds that $\mu^{< \mu} = \mu$.

One natural question in the context of our paper is how many of the results from first order logic generalize in this new setting. For instance:

\begin{que}
  Can we generalize the notion of atomic models to fit the framework of AEC's in such a way that the analogue of Theorem \ref{Hjorth3} for atomic models of size equal to the L$\ddot{o}$wenheim-Skolem number holds true?
\end{que}

In Section \ref{sectionPrTh} we present the Baldwin- Boney Relational Presentation Theorem and in Section \ref{sectionGPS} we define Generalized Polish Spaces and present some initial results for this notion. In Section \ref{sectiontopologyAEC} we prove the main result of the paper. More specifically, we define a topology on the models of an AEC of size equal to $\kappa = LS(\mathbf{K})$ (under the assumption that $(2^{\kappa})^{< 2^{\kappa}} = 2^{\kappa}$), and prove that they form a Generalized Polish Space. Finally, in Section \ref{sectionGentopologyAEC}, we generalize the last result for $\lambda > LS(\mathbf{K})$ (here we need the set theoretic assumption that for  $\mu=(\lambda + \kappa^{+})^{\kappa}$, $\mu^{<\mu}=\mu$).

\section{The relational presentation theorem}\label{sectionPrTh}

In this section we present the Relational Presentation Theorem for AEC's, following J. Baldwin and W. Boney. The reader who is familiar with \cite{PrTh} should skip this section.

\begin{defin}\label{K_m}
    Let $\mathbf{K}$ be an AEC and $\mu$ a cardinal. Then, $\mathbf{K}_{\mu}$ is the class of all models in $\mathbf{K}$ of size $\mu$.
\end{defin}

First, we fix some notation. Let $\mathbf{K}$ be an AEC in a vocabulary $\tau$ and let $\kappa$ denote the L$\ddot{o}$wenheim-Skolem number $LS(\mathbf{K})$.  We assume that $\mathbf{K}$ contains no models of size $< LS(\mathbf{K})$. The same arguments given here could also be given for $\kappa > LS(\mathbf{K})$.

We fix some compatible enumerations for models $M \in \mathbf{K}_{\kappa}$. Compatible enumerations means that each M has an enumeration of its universe, denoted $\mathbf{m}^{M} = (m_{i}^{M} : i < \kappa)$, and, if $M \cong N$, there is some fixed isomorphism $f_{M, M'} : M \cong M'$ such that $f_{M, M'}(m_{i}^{M}) = m_{i}^{M'}$ and if $M \cong M' \cong M''$, then $f_{M, M''} = f_{M', M''} \circ f_{M, M'}$.

For each isomorphism type $[M]_{\cong}$ and $[M \prec_{\mathbf{K}} N]_{\cong}$ with M, N $\in \mathbf{K}_{\kappa}$, we add to $\tau$ new predicates $R_{[M]}(\mathbf{x})$ and $R_{[M \prec_{\mathbf{K}} N]}(\mathbf{x}, \mathbf{y})$ which are $\kappa$-ary and $(\kappa \times 2)$-ary respectively and we form $\tau^{*} \supseteq \tau$.

Next, we define the presentation theory $T^{*}$. The purpose of this theory is to identify strong submodels of size $\kappa$ and strong submodel relations between these models via the new predicates $R_{[M]}$ and $R_{[M \prec_{\mathbf{K}} N]}$. This is done by expressing properties that connect the canonical enumerations with structures in $\mathbf{K}$ using the next axioms ($\mathbf{x}$ is a sequence of length at most $\kappa$).

\begin{center}
$R_{[M]}(\mathbf{x})$ holds iff $x_{i} \mapsto m_{i}^{M}$ is an isomorphism
\end{center}

\begin{center}
$R_{[M \prec_{\mathbf{K}} N]}(\mathbf{x}, \mathbf{y})$ holds iff $x_i \mapsto m_{i}^{M}$ and $y_{i} \mapsto m_{i}^{N}$ are isomorphisms  and $x_{i} = y_{j}$ iff $m_{i}^{M} = m_{j}^{N}$
\end{center}

Note that by the coherence of the isomorphisms, the choice of representative from $[M]_{\cong}$ doesn't matter. Also, we might have $M \cong M', N \cong N', M \prec_{\mathbf{K}} N$ and $M' \prec_{\mathbf{K}} N'$, but not $(M, N) \cong (M', N')$. In this case $R_{[M \prec_{\mathbf{K}} N]}$ and $R_{[M' \prec_{\mathbf{K}} N']}$ are different predicates.

We, now, write the axioms of $T^{*}$.

\begin{defin}\label{dT*}
    The $\mathbf{theory}$ $\mathbf{T^{*}}$ in $\mathcal{L}_{(I(\mathbf{K}, \kappa) + \kappa)^{+}, \kappa^{+}}(\tau^{*}) \subseteq \mathcal{L}_{(2^{\kappa})^{+}, \kappa^{+}}(\tau^{*})$ is the collection of the following schemata:

    \begin{enumerate}
        \item If $R_{[M]}(\mathbf{x})$ holds, then $x_{i} \mapsto m_{i}^{M}$ should be an isomorphism.\\
        If $\phi(z_1, ..., z_n)$ is an atomic or negated atomic $\tau$- formula that holds of $m_{i_{1}}^{M}, ..., m_{i_{n}}^{M}$, then include
        \begin{center}
            $\forall \mathbf{x} (R_{[M]}(\mathbf{x}) \to \phi(x_{i_{1}}, ..., x_{i_{n}}))$
        \end{center}
        \item If $R_{[M \prec_{\mathbf{K}} N]}(\mathbf{x}, \mathbf{y})$ holds, then $x_{i} \mapsto m_{i}^{M}$ and $y_{i} \mapsto m_{i}^{N}$ should be isomorphisms and the correct overlap should occur.\\
        If $M \prec_{\mathbf{K}} N$ and $i \mapsto j_{i}$ is the function such that $m_{i}^{M} = m_{j_{i}}^{N}$, then include 
        \begin{center}
            $\forall \mathbf{x}, \mathbf{y} (R_{[M \prec_{\mathbf{K}} N]}(\mathbf{x}, \mathbf{y}) \rightarrow (R_{[M]}(\mathbf{x}) \wedge R_{[N]}(\mathbf{y}) \wedge \bigwedge_{i < \kappa} x_{i} = y_{j_{i}}))$
        \end{center}
        \item Every $\kappa$-tuple is covered by a model.\\
        Include the following where lg$(\mathbf{x})$ = lg$(\mathbf{y})$ = $\kappa$
        \begin{center}
            $\forall \mathbf{x} \exists \mathbf{y} (\bigvee_{[M]_{\cong} \in \mathbf{K}_{\kappa}/ \cong} R_{[M]}(\mathbf{y}) \wedge \bigwedge_{i < \kappa} \bigvee_{j < \kappa} x_{i} = y_{j})$
        \end{center}
        
        \item If $R_{[N]}(\mathbf{x})$ holds and $M \prec_{\mathbf{K}} N$, then $R_{[M \prec_{\mathbf{K}} N]}(\mathbf{x}^{o}, \mathbf{x})$ should hold for the appropriate subtuple $\mathbf{x}^{o}$ of $\mathbf{x}$.\\
        If $M \prec_{\mathbf{K}} N$ and $\pi : \kappa \to \kappa$ is the unique map so $m_{i}^{M} = m_{\pi(i)}^{N}$, then denote $\mathbf{x}^{\pi}$ to be the subtuple of $\mathbf{x}$ such that $x_{i}^{\pi} = x_{\pi(i)}$ and include 
        \begin{center}
            $\forall \mathbf{x} (R_{[N]}(\mathbf{x}) \rightarrow R_{[M \prec_{\mathbf{K}} N]}(\mathbf{x}^{\pi}, \mathbf{x}))$
        \end{center}
        \item Coherence: If $M \subseteq N$ are both strong substructures of the whole model, then $M \prec_{\mathbf{K}} N$.\\
        If $M \prec_{\mathbf{K}} N$ and $m_{i}^{M} = m_{j_{i}}^{N}$, then include
        \begin{center}
            $\forall \mathbf{x}, \mathbf{y} (R_{[M]}(\mathbf{x}) \wedge R_{[N]}(\mathbf{y}) \wedge \bigwedge_{i < \kappa} x_{i} = y_{i_{j}} \rightarrow R_{[M \prec_{\mathbf{K}} N]}(\mathbf{x}, \mathbf{y}))$
        \end{center}
    \end{enumerate}
    \end{defin}

    \underline{Remark:} The converse of (1) of Definition \ref{dT*} is not true.

    \begin{thm}(Relational Presentation Theorem)\label{prth}
If $M^{*} \models T^{*}$, then we write just $M$ instead of $M^{*}|_{\tau}$.
\begin{enumerate}
    \item If $M^{*} \models T^{*}$ then $M^{*}|_{\tau} \in \mathbf{K}$. Further, for all $M_{0} \in \mathbf{K}_{\kappa}$, we have $M^{*} \models R_{[M_{0}]}(\mathbf{m})$ implies that $\mathbf{m}$ enumerates a strong submodel of M.
    \item Every $M \in \mathbf{K}$ has a unique expansion $M^{*}$ that models $T^{*}$.
    \item If $M \prec_{\mathbf{K}} N$, then $M^{*} \subseteq N^{*}$.
    \item If $M^{*} \subseteq N^{*}$ both model $T^{*}$, then $M \prec_{\mathbf{K}} N$.
    \item If $M \prec_{\mathbf{K}} N$ and $M^{*} \models T^{*}$ such that $M^{*}|_{\tau} = M$, then there is $N^{*} \models T^{*}$ such that $M^{*} \subseteq N^{*}$ and $N^{*}|_{\tau} = N$.
\end{enumerate}
    \end{thm}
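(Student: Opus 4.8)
The plan is to treat parts (2)--(5) as consequences of one explicit construction together with the abstract AEC axioms, and to concentrate the genuine soundness content in part (1). First I would establish \emph{existence} in part (2) by writing down the canonical expansion. Given $M \in \mathbf{K}$, interpret for each isomorphism type $R_{[M_0]}^{M^*}$ as the set of $\kappa$-tuples $\mathbf{a}$ for which $m_i^{M_0} \mapsto a_i$ is an isomorphism of $M_0$ onto a \emph{strong} submodel of $M$ (so $\{a_i : i<\kappa\} \prec_{\mathbf{K}} M$), and $R_{[M_0 \prec_{\mathbf{K}} N_0]}^{M^*}$ analogously for pairs. Existence then reduces to verifying the five schemata of Definition \ref{dT*}, and the point I would stress is that each schema matches exactly one AEC axiom: (1) and (2) hold because the witnessing maps are isomorphisms and the overlap is built in; schema (3) is precisely the L\"owenheim--Skolem axiom, since any $\le \kappa$ elements of $M$ sit inside a strong submodel of size $\kappa$ whose canonical enumeration witnesses the cover; schema (4) is isomorphism-invariance of $\prec_{\mathbf{K}}$ together with transitivity (the subtuple enumerates the image of $M_0 \prec_{\mathbf{K}} N_0$, which is $\prec_{\mathbf{K}} M$); and schema (5) is the AEC coherence axiom, that $A \subseteq B$ with $A,B \prec_{\mathbf{K}} M$ forces $A \prec_{\mathbf{K}} B$.

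Next I would prove the soundness direction of part (1), which is the heart of the theorem. Let $M^* \models T^*$. By schema (1) every $R$-model $A_{\mathbf{a}} := \{a_i : i<\kappa\}$ with $R_{[M_0]}(\mathbf{a})$ is isomorphic to $M_0$ (atomic and negated-atomic transfer give an isomorphism onto the induced substructure), so $A_{\mathbf{a}} \in \mathbf{K}_\kappa$; and by schema (3) the family $\mathcal{F}$ of all $R$-models is directed under inclusion with $\bigcup \mathcal{F} = M$. I would then argue, by induction on $|M|$, that $M \in \mathbf{K}$ and that every $R$-model is $\prec_{\mathbf{K}} M$: for $|M| = \kappa$ an enumeration of $M$ is covered by a single $R$-model, so $M$ is itself $\cong P_0 \in \mathbf{K}_\kappa$ and lies in $\mathbf{K}$; for larger $M$ I would resolve it as a $\prec_{\mathbf{K}}$-continuous increasing union of smaller substructures that are closed under covers (hence still satisfy schema (3) and fall under the inductive hypothesis) and invoke the AEC chain axioms.

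The main obstacle --- and really the whole difficulty of the theorem --- is upgrading the mere \emph{inclusion}-directedness supplied by schema (3) to a genuinely $\prec_{\mathbf{K}}$-directed system, i.e. showing that $A_{\mathbf{a}} \subseteq A_{\mathbf{b}}$ among $R$-models forces $A_{\mathbf{a}} \prec_{\mathbf{K}} A_{\mathbf{b}}$. The naive attempt fails because a substructure of a member of $\mathbf{K}_\kappa$ that merely happens to be isomorphic to another member of $\mathbf{K}_\kappa$ need not be a strong submodel, and schemata (4)--(5) only speak about overlaps that already correspond to a relation $M_0 \prec_{\mathbf{K}} N_0$. The resolution I would pursue is to avoid asserting strongness of a raw inclusion directly: once the induction yields that each relevant $R$-model is $\prec_{\mathbf{K}} M$, the AEC coherence axiom converts any inclusion $A_{\mathbf{a}} \subseteq A_{\mathbf{b}}$ between two submodels both strong in $M$ into $A_{\mathbf{a}} \prec_{\mathbf{K}} A_{\mathbf{b}}$, which is exactly the strong-embedding data needed to feed the chain axioms and close the induction. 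Schemata (2) and (4) are what let me read these strong relations off the predicates $R_{[M_0 \prec_{\mathbf{K}} N_0]}$ in the first place.

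Finally I would derive the remaining statements cheaply. For \emph{uniqueness} in part (2): the argument of part (1) shows that any expansion of a fixed $M \in \mathbf{K}$ modeling $T^*$ must interpret $R_{[M_0]}$ as exactly the strong-submodel enumerations (the inclusion ``$\subseteq$'' is the ``Further'' clause of part (1), and ``$\supseteq$'' follows from schemata (3) and (4)), so the expansion coincides with the canonical one and is unique. Part (3) follows because $M \prec_{\mathbf{K}} N$ makes every strong submodel of $M$ a strong submodel of $N$ by transitivity, while coherence gives the converse for tuples from $M$, so $M^*$ is an induced $\tau^*$-substructure of $N^*$. Part (5) is then immediate: take $N^*$ to be the canonical expansion of $N$ and apply part (3), using that $M^*$ is forced to be canonical by uniqueness. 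For part (4), if $M^* \subseteq N^*$ both model $T^*$ then every $R$-model of $M^*$ is, being a tuple on which a relation symbol holds, also an $R$-model of $N^*$; by part (1) applied to $N^*$ each such piece is $\prec_{\mathbf{K}} N$, and since these pieces are directed with union $M$, the AEC axiom on $\prec_{\mathbf{K}}$-directed families sitting inside a common model yields $M \prec_{\mathbf{K}} N$.
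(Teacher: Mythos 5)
First, a point of reference: the paper does not prove Theorem \ref{prth} at all --- it is quoted from Baldwin--Boney \cite{PrTh} and used as a black box, so there is no in-paper proof to compare yours against. Judged on its own terms, your proposal correctly identifies where the difficulty lives (upgrading inclusion-directedness of the $R$-models to $\prec_{\mathbf{K}}$-directedness), and your treatment of existence in (2), of (3), and of the reductions of (4) and (5) to (1)--(3) is reasonable. But the resolution you offer for the crux is circular. You propose: ``once the induction yields that each relevant $R$-model is $\prec_{\mathbf{K}} M$, the AEC coherence axiom converts any inclusion $A_{\mathbf{a}} \subseteq A_{\mathbf{b}}$ between two submodels both strong in $M$ into $A_{\mathbf{a}} \prec_{\mathbf{K}} A_{\mathbf{b}}$.'' That each $R$-model is $\prec_{\mathbf{K}} M$ is precisely the ``Further'' clause of (1), i.e.\ the conclusion you are trying to reach; AEC coherence needs both submodels to \emph{already} be strong in a common ambient model of $\mathbf{K}$, and at this stage you know only that $A_{\mathbf{a}} \subseteq M$ as a $\tau$-substructure with $A_{\mathbf{a}} \cong M_0 \in \mathbf{K}_\kappa$, which is strictly weaker. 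The problem is already fully present in your base case $|M| = \kappa$: there, schema (3) applied to a full enumeration does give $M \cong N_1 \in \mathbf{K}_\kappa$, hence $M \in \mathbf{K}$, but you give no argument whatsoever that a \emph{proper} $R$-model $A_{\mathbf{a}} \subsetneq M$ satisfies $A_{\mathbf{a}} \prec_{\mathbf{K}} M$, and your induction never discharges this. Since your inductive step (resolving larger $M$ into a $\prec_{\mathbf{K}}$-increasing chain) and your proofs of uniqueness in (2) and of (4) all consume the ``Further'' clause, the gap propagates through the whole argument.

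Concretely, the missing content is the step that converts a raw range-inclusion $\operatorname{ran}(\mathbf{a}) \subseteq \operatorname{ran}(\mathbf{b})$ between tuples satisfying $R_{[M_0]}$ and $R_{[N_0]}$ into an instance of the pair predicate $R_{[M_0' \prec_{\mathbf{K}} N_0]}(\mathbf{a},\mathbf{b})$, from which strongness can legitimately be read off via schema (2). You gesture at schemata (2), (4) and (5) for this, but (2) and (4) only propagate pair-facts \emph{downward} from already-established ones, and the coherence schema (5) fires only when the overlap pattern of $\mathbf{a}$ inside $\mathbf{b}$ already matches the canonical overlap of an actual strong pair in $\mathbf{K}_\kappa$ --- which is essentially the fact to be proved. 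This is exactly the soundness content of the presentation theorem, and it is the part your proposal does not supply; for the actual argument you should consult \cite{PrTh} (and note the remark in the paper that the converse of schema (1) fails, which is a symptom of the same issue: satisfying the right atomic diagram is not enough to be an $R$-model, and being an induced substructure of an $R$-model is not enough to be a strong one).
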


    \section{Generalized Polish Spaces}\label{sectionGPS}
In this section we present the basic facts about Generalized Polish spaces. We follow Claudio Agostini, Luca Motto Ros and Philip Schlicht (cf. \cite{GPS}) in our presentation.
   
   More specifically, we study the space $\kappa^\kappa$ and its subspaces, where $\kappa$ is an uncountable regular cardinal. This generalizes the results about $\omega^\omega$ seen in  Descriptive Set Theory. 
   
   The two main spaces are: 
   
    \begin{enumerate}
        \item The \underline{Generalized Baire Space}
        \begin{center}
        $\kappa^{\kappa} = \{ x | x: \kappa \to \kappa \}$
        \end{center}
        of all sequences with values in $\kappa$ and length $\kappa$, equipped with the bounded topology $\tau_{b}$, i.e. the topology generated by the sets of the form 
        \begin{center}
            $N_{s} = \{ x \in \kappa^{\kappa} | s \subseteq x \}$
        \end{center}
        where $s \in \kappa^{< \kappa}$.
        \item The \underline{Generalized Cantor Space}
        \begin{center}
            $2^{\kappa} = \{ x | x: \kappa \to 2 \}$
        \end{center}
        which is a closed subset o $\kappa^{\kappa}$, equipped with the relative topology.
        
    \end{enumerate}

    \begin{defin}\label{weight}
    A topological space $X$ has $\mathbf{weight}$ $\kappa$, if there is a base for its topology of size $\kappa$.
    \end{defin}

    Since the classical Cantor and Baire Spaces are second countable, it is natural to require accordingly that $\kappa^{\kappa}$ and $2^{\kappa}$ have weight $\kappa$: this amounts to requiring that $\kappa^{< \kappa} = \kappa$, or equivalently that $\kappa$ is regular and $2^{< \kappa} = \kappa$. Thus such assumption is one of the basic conditions in the development of the theory of Generalized Polish Spaces.

Recall that a Polish Space is a completely metrizable second countable space. In order to generalize this we need some definitions. 

    Consider a totally ordered (Abelian) group
    \begin{center}
        $G = (G, +_{G}, 0_{G}, \leq_{G})$
    \end{center}
    with $\mathbf{degree}$ Deg(G) = $\kappa$, where Deg(G) is the coinitiality of $G^{+} = \{\epsilon \in G | 0_{G} <_{G} \epsilon \}$ of G. A $\mathbf{G}$-$\mathbf{metric}$ on a nonempty space X is a function $d: X^{2} \to G^{+} \cup \{0_{G}\}$ satisfying the usual rules of a distance function: for all $x, y, z \in X$
    \begin{itemize}
        \item $d(x, y) = 0_{G} \Leftrightarrow x = y$
        \item $d(x, y) = d(y, x)$
        \item $d(x, z) \leq_{G} d(x, y) +_{G} d(y, z)$
    \end{itemize}
    Every $G$-metric space $(X, d)$ is naturally equipped with the $d$-topology generated by its open balls
    \begin{center}
        $B_{d}(x, \epsilon) = \{y \in X | d(x, y) <_{G} \epsilon \}$
    \end{center}
    where $x \in X$ and $\epsilon \in G^{+}$. If $X$ is already a topological space, we say that the $G$-metric $d$ is compatible with the topology of $X$, if the latter coincides with the $d$-topology. A topological space is $G$-$\mathbf{metrizable}$ if it admits a compatible $G$-metric.

    Let $(X, d)$ be a $G$-metric space. A sequence $(x_{i})_{i < \kappa}$ of point from X is $d$-Cauchy if
    \begin{center}
        $\forall \epsilon \in G^{+} \exists \alpha < \kappa \forall \beta, \gamma \geq \alpha (d(x_{\beta}, x_{\gamma}) <_{G} \epsilon)$.
    \end{center}
    The space (X, $d$) is Cauchy-complete if every Cauchy sequence $(x_{i})_{i < \kappa}$ converges to some $x \in X$, that is,
    \begin{center}
        $\forall \epsilon \in G^{+} \exists \alpha < \kappa \forall \beta \geq \alpha (d(x_{\beta}, x) <_{G} \epsilon)$.
    \end{center}

    \begin{defin}\label{GenPolSp}
        A space $X$ is $\mathbf{G}$-$\mathbf{Polish}$ if it is completely $G$-metrizable and has weight $\leq \kappa$.
    \end{defin}

    Clearly, $G$-Polish Spaces are closed under closed subspaces. Moreover, the space $\kappa^{\kappa}$ (endowed with the bounded topology) is always $G$-Polish, as witnessed by the $G$-metric 
    \begin{center}
        d(x, y) = $ \left\{ 
        \begin{array}{ll}
        0_{G} & \text{if $x = y$}\\
        r_{\alpha} & \text{if $x|_{\alpha} = y|_{\alpha}$ and  $x(\alpha) \neq y(\alpha)$}
        \end{array} \right.$
    \end{center}
    where $(r_{\alpha})_{\alpha < \kappa}$ is a strictly decreasing sequence coinitial in $G^{+}$. It follows that all closed subspaces, including $2^{\kappa}$, are $G$-Polish for any $G$ as above.

    \begin{defin}
        Let $X$ be a space. A set $A\subseteq X$ is $G^{\kappa}_{\delta}$ if it can be written as a $\kappa$-sized intersection of open sets of $X$.
    \end{defin}

    \begin{thm}
        Let $X$ be a $G$-Polish space and $Y \subseteq X$. Then $Y$ is $G$-Polish if and only if $Y$ is $G^{\kappa}_{\delta}$ in $X$.
    \end{thm}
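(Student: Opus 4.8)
The plan is to prove both implications by generalizing the classical characterization that a subspace of a Polish space is Polish exactly when it is $G_\delta$, replacing the countable scale $(1/n)_{n<\omega}$ by a strictly decreasing sequence $(r_\alpha)_{\alpha<\kappa}$ coinitial in $G^{+}$ (which exists because $\operatorname{Deg}(G)=\kappa$). Throughout I fix a compatible complete $G$-metric $d$ on $X$, and I avoid taking suprema or infima in $G$ (which need not exist) by phrasing every size condition as a bounded quantifier, e.g. reading ``diameter $<_G r_\alpha$'' as ``$d(u,v)<_G r_\alpha$ for all $u,v$ in the set''.

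For the forward direction ($G$-Polish $\Rightarrow G^{\kappa}_{\delta}$): first replace $X$ by the closure $\overline{Y}$, which is a closed, hence $G$-Polish, subspace in which $Y$ is dense; let $\rho$ be a complete $G$-metric on $Y$ compatible with the subspace topology. For each $\alpha<\kappa$ I set $A_\alpha=\{x\in X:\text{there is an open }V\ni x\text{ whose }d\text{-diameter is }<_G r_\alpha\text{ and whose }\rho\text{-diameter on }V\cap Y\text{ is }<_G r_\alpha\}$. Each $A_\alpha$ is open and contains $Y$, so it remains to show $\bigcap_{\alpha<\kappa}A_\alpha\subseteq Y$. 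Given $x$ in the intersection, I choose witnessing neighborhoods $V_\alpha$ and, using density of $Y$, points $y_\alpha\in Y\cap\bigcap_{\beta\le\alpha}V_\beta$. By construction $(y_\alpha)_{\alpha<\kappa}$ is simultaneously $d$-Cauchy (converging to $x$) and $\rho$-Cauchy; here regularity of $\kappa$ together with coinitiality of $(r_\alpha)$ is exactly what makes the tails genuinely Cauchy. Completeness of $\rho$ yields a limit in $Y$, and compatibility of both metrics forces this limit to equal $x$, so $x\in Y$ and $Y=\bigcap_{\alpha<\kappa}A_\alpha$ is $G^{\kappa}_{\delta}$.

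For the reverse direction ($G^{\kappa}_{\delta}\Rightarrow G$-Polish) I reduce everything to a single closed embedding into a binary product. Two preliminary facts are needed and are routine: the binary product of two $G$-Polish spaces is $G$-Polish (use the $G$-metric $\max(d_1,d_2)$, which exists since $G$ is totally ordered, and note that the weight stays $\le\kappa$), and, via a bijection $\kappa\times\kappa\cong\kappa$, one has $X\times(\kappa^{\kappa})^{\kappa}\cong X\times\kappa^{\kappa}$, which is therefore $G$-Polish. Writing $Y=\bigcap_{\alpha<\kappa}U_\alpha$ with each $U_\alpha$ open and $C_\alpha=X\setminus U_\alpha$ closed, I aim to build a continuous map $g\colon Y\to\kappa^{\kappa}$ recording, across all indices $\alpha$ and all scales $\beta$ (jointly indexed by $\kappa\times\kappa\cong\kappa$), a ``distance-to-$C_\alpha$ profile'' that diverges to the edge of $\kappa^{\kappa}$ precisely as $x$ approaches some $C_\alpha$. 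The map $y\mapsto(y,g(y))$ is then a topological embedding, and the point is that its image is closed in $X\times\kappa^{\kappa}$: a limit $(x,c)$ of image points has $x\in\overline{Y}\subseteq X$, and if $x$ lay on some $C_\alpha$ the profile would have escaped to a value not attained by any $g(y)$ with $y\in Y$, so in fact $x\in\bigcap_\alpha U_\alpha=Y$ and $(x,c)=(x,g(x))$. As a closed subspace of a $G$-Polish space, $Y$ is then $G$-Polish.

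The main obstacle is exactly the construction of this profile map $g$ and the verification that $(\mathrm{id},g)$ is a \emph{closed} embedding. Classically one simply takes $g(x)=1/d(x,C)$, a genuinely continuous real-valued function tending to $+\infty$ at the boundary; but in a general totally ordered abelian group $G$ there are neither reciprocals nor a guarantee that $d(x,C)=\inf\{d(x,c):c\in C\}$ exists, so the real-valued blow-up must be replaced by an ordinal-indexed profile read off the coinitial scale $(r_\alpha)$. The delicate part is that the naive profile ``is $B_d(x,r_\beta)\cap C_\alpha=\emptyset$?'' can jump exactly at centers $x$ with $d(x,C_\alpha)=r_\beta$, so continuity is not automatic; engineering a profile that stays continuous while still diverging at every boundary is the technical heart of the argument. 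A secondary, purely bookkeeping point is checking that the relevant products carry the bounded topology and keep weight $\le\kappa$, for which regularity of $\kappa$ and the standing assumption $\kappa^{<\kappa}=\kappa$ are used.
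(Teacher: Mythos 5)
First, a point of order: the paper itself states this theorem without proof, as background imported from \cite{GPS}, so there is no in-paper argument to compare yours against; I can only assess the proposal on its own terms.

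Your forward direction is essentially correct and is the right generalization of the classical Alexandrov argument. Two points should be made explicit rather than left implicit: the sets $A_\alpha$ must be formed inside $\overline{Y}$ (as written, a point of $X\setminus\overline{Y}$ satisfies the $\rho$-diameter clause vacuously, so you also need the routine fact that the closed set $\overline{Y}$ is itself $G^{\kappa}_{\delta}$ in $X$); and the choice of $y_\alpha\in Y\cap\bigcap_{\beta\le\alpha}V_\beta$ uses that an intersection of fewer than $\kappa$ open sets with a common point is a neighborhood of that point, which holds in a $G$-metric space because $(r_\gamma)_{\gamma<\kappa}$ is coinitial in $G^{+}$ and $\kappa$ is regular. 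Both are fixable in a line each.

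The reverse direction, however, contains a genuine gap, and you have located it yourself: the whole argument is routed through a continuous map $g\colon Y\to\kappa^{\kappa}$ whose ``profile'' must blow up at every $C_\alpha$, and this map is never constructed. This is not a deferrable detail. In $G$ there are no reciprocals and $d(x,C_\alpha)$ need not exist as an infimum, and the natural candidate profile (e.g.\ $\beta\mapsto$ least $\gamma$ with $B_d(x,r_\gamma)\cap C_\alpha=\emptyset$) is genuinely discontinuous at points where the distance to $C_\alpha$ sits exactly at a scale value; worse, since $\kappa^{\kappa}$ carries the bounded topology, continuity of $g$ demands that each coordinate be locally constant, a far stronger requirement than in the real-valued case. ``Divergence'' also needs reinterpreting, since $\kappa^{\kappa}$ has no point at infinity: what the closedness argument actually requires is that for $y_i\to x\in C_\alpha$ some fixed coordinate of $g(y_i)$ fails to stabilize, contradicting convergence. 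Until such a $g$ is exhibited and both properties are verified, the right-to-left implication is unproved. The standard escape in the generalized setting is to abandon the divergence function entirely and instead build a Cauchy-complete scheme (equivalently, argue via complete sequences of open covers or the strong $\kappa$-Choquet game): at stage $\alpha$ refine into relatively open pieces of $d$-diameter $<_G r_\alpha$ whose closures are contained in $U_\alpha$, so that completeness of $d$ together with the closure condition forces every branch to converge to a point of $\bigcap_{\alpha<\kappa}U_\alpha$; this is, in substance, the route taken in \cite{GPS}. I would either cite the theorem as the paper does, or rewrite your reverse direction along those lines.
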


    \section{Defining a Topology on AEC's}\label{sectiontopologyAEC}

    In this section, we assume that $\mathbf{K}$ is an AEC with $\kappa = LS(\mathbf{K})$ in a vocabulary $\tau$, where $|\tau| \leq \kappa$. We define a topology on the models of cardinality $\kappa$ that belong to $\mathbf{K}$ and prove that they form a $G$-Polish space. 
    
    For the smooth development of the theory we isolate the following assumption which we assume throughout the rest of this paper.
        
    \vspace{6pt}    
    \textbf{Assumption:}      Let $\lambda \geq \kappa$. For $\mu= (\lambda + \kappa^{+})^{\kappa}$, it holds that $\mu^{<\mu}=\mu$. 
    \vspace{6pt}    
    
    The proof follows \cite{Hjorth}. We use the Relational Presentation Theorem and take as basic open sets those of the form:
    \begin{center}
        $\{ M | M \models \phi(\alpha_{1}, \alpha_{2}, ...)\}$
    \end{center}
    for all $\phi \in \mathcal{L}_{(2^{\kappa})^{+}, \kappa^{+}}$ and $\bar{\alpha} = (\alpha_{1}, \alpha_{2}, ...) \in \kappa$. Then we prove that our class is a $G^{\kappa}_{\delta}$ subset of the space $2^{2^{\kappa}}$ and thus a $G$-Polish space.

    We give some definitions first.

\begin{defin}\label{genfragment}
     Let $\lambda \geq \kappa$ be infinite cardinals and for $\mu= (\lambda + \kappa^{+})^{\kappa}$ it holds that $\mu^{<\mu}=\mu$. 
     
     We call a set of $\mathcal{L}_{\mu^{+}, \kappa^{+}}$-formulas, $\mathcal{F}$, a $\mathbf{fragment}$ if there is a set of variables $V$ of cardinality at least $\kappa^{+}$, such that if $\phi \in \mathcal{F}$, then all variables occurring in $\phi$ are in $V$ and $\mathcal{F}$ satisfies the following properties: 
        \begin{enumerate}
            \item all atomic formulas using only variables from $V$ and constant symbols are in $\mathcal{F}$
            \item if $\phi \in \mathcal{F}$ and $\psi$ is a subformula of $\phi$, then $\psi \in \mathcal{F}$
            \item if $\phi \in \mathcal{F}, v$ is free in $\phi$ and t is a term where every variable is in $V$, then the formula obtained by substituting t into all free occurrences of $v$ is in $\mathcal{F}$  
            \item $\mathcal{F}$ is closed under $\neg$
            \item $\mathcal{F}$ is closed under $\exists v$ for $v \in V$ (for finite length, i.e. it may not be closed for $\exists \bar{v}$, where $\bar{v}$ has infinite length)
            \item If $(\phi_{i})_{i < \sigma}$ are formulas in $\mathcal{F}$, then $\bigwedge_{i < \sigma}\phi_{i} \in \mathcal{F}$, for $\sigma < \mu$
            \item every formula in $\mathcal{F}$ has at most $\kappa$ many free variables
        \end{enumerate}
\end{defin}

    \underline{Observations:} \begin{enumerate} 
    \item If $T$ is an $\mathcal{L}_{\mu^{+}, \kappa^{+}}$-theory (it could be just a sentence), then there exists a minimum fragment that contains that theory. In order to construct that minimum fragment, one should include all the atomic formulas with variables in $V$, all the subformulas of $T$ and then include every other formula required by the closure properties $(3)$-$(7)$.
    \item For us, $\lambda$ will be the size of the models on which we define a topology. In this section, $\lambda$ is equal to $\kappa$ so $\mu = (\lambda + \kappa^{+})^{\kappa} = 2^{\kappa}$.
    \end{enumerate}

    We are now ready to define the topology:

    \begin{defin}\label{topology}
\begin{enumerate}
    \item $Mod_{\kappa}(\tau)$ is the space of $\tau$-structures whose  universe is $\kappa$.
    \item For $\mathcal{F} \subseteq \mathcal{L}_{\mu^{+}, \kappa^{+}}(\tau)$ a fragment, we let $T_{\mathcal{F}}$ be the $\mathbf{topology}$ on $Mod_\kappa(\tau)$ generated by the basic sets

        \begin{center}
            $\{ M | M \models \phi(\alpha_{1}, \alpha_{2}, ...)\}$,
        \end{center}

        where $\phi(\bar{x}) \in \mathcal{F}$ and $\bar{\alpha} = (\alpha_{1}, \alpha_{2}, ...) \in \kappa$. 
\end{enumerate}    
    \end{defin}

    \begin{thm}\label{THMtopology}
        Let $\mathbf{K}$ be an AEC in a vocabulary $\tau$. If $\mathcal{F}$ is the minimum fragment of $\mathcal{L}_{\mu^{+}, \kappa^{+}}(\tau^{*})$ that contains the theory $T^{*}$ from the Relational Presentation Theorem, then $(\mathbf{K}_{\kappa}, T_{\mathcal{F}})$ is a $G$-$Polish$ space. 
    
        To be more accurate, $T_{\mathcal{F}}$ cannot be defined on $\mathbf{K}_{\kappa}$, since the models in $\mathbf{K}$ are $\tau$-structures and $T_{\mathcal{F}}$ is defined on $\tau^{*}$-structures. But we will overlook this fact since there is a bijection between the models in $\mathbf{K}$ and their expansions in the Relational Presentation theorem.
    \end{thm}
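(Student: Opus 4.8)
The plan is to embed $(Mod_\kappa(\tau^*),T_{\mathcal F})$ homeomorphically into a generalized Cantor space and then exhibit the image as a $\mu$-sized intersection of open sets (the $G^\kappa_\delta$ notion of Section \ref{sectionGPS}, read with ambient cardinal $\mu$), so that the subspace theorem of Section \ref{sectionGPS} applies. Throughout write $\mu=2^\kappa$; since $\lambda=\kappa$ here we have $(\lambda+\kappa^+)^\kappa=\mu$, and the Assumption gives $\mu^{<\mu}=\mu$. Thus $\mu$ takes the role of the ambient regular cardinal of Section \ref{sectionGPS}: the space $2^\mu$ with the bounded topology is $G$-Polish for a suitable $G$ with $\mathrm{Deg}(G)=\mu$, and closed (more generally $G^\kappa_\delta$) subspaces of $G$-Polish spaces are $G$-Polish.

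First I would fix the coding. Let $J$ be the set of pairs $(\phi,\bar\alpha)$ with $\phi\in\mathcal F$ and $\bar\alpha$ an assignment of the free variables of $\phi$ (at most $\kappa$ many, by Definition \ref{genfragment}(7)) into $\kappa$. The minimum fragment of $\mathcal L_{\mu^+,\kappa^+}(\tau^*)$ containing $T^*$ has size $\le\mu^{<\mu}=\mu$, and each formula admits $\le\kappa^\kappa=\mu$ assignments, so $|J|\le\mu$ and $2^J\cong 2^\mu$ is $G$-Polish. Define $\Phi\colon Mod_\kappa(\tau^*)\to 2^J$ by setting $\Phi(M^*)(\phi,\bar\alpha)=1$ iff $M^*\models\phi(\bar\alpha)$. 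Since the atomic formulas lie in $\mathcal F$ and a structure on the fixed universe $\kappa$ is determined by its atomic diagram, $\Phi$ is injective. It is moreover a homeomorphism onto its image: the $T_{\mathcal F}$-basic set $\{M^*:M^*\models\phi(\bar\alpha)\}$ is the $\Phi$-preimage of the subbasic clopen $\{f:f(\phi,\bar\alpha)=1\}$, and — crucially — its complement is again $T_{\mathcal F}$-open because $\mathcal F$ is closed under $\neg$ (Definition \ref{genfragment}(4)), so $\{f:f(\phi,\bar\alpha)=0\}$ pulls back to the basic set for $\neg\phi$. Hence $\Phi$ is continuous, and it carries $T_{\mathcal F}$-basic opens to relatively open sets, so the two topologies agree under $\Phi$.

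The heart of the argument is to show that $\mathrm{Im}\,\Phi$ is a $\mu$-sized intersection of open subsets of $2^J$. I would characterize $\mathrm{Im}\,\Phi$ as the set of $f$ that are the $\mathcal F$-theory-with-parameters of a genuine $\tau^*$-structure on $\kappa$, by imposing the usual Henkin/Tarski conditions, one per relevant coordinate: (i) equality is correct, $f(x=y,(\alpha,\beta))=1 \iff \alpha=\beta$; (ii) $f(\neg\phi,\bar\alpha)=1 \iff f(\phi,\bar\alpha)=0$; (iii) $f(\bigwedge_{i<\sigma}\phi_i,\bar\alpha)=1 \iff f(\phi_i,\bar\alpha)=1$ for all $i<\sigma$; (iv) the witnessing condition $f(\exists\bar v\,\psi,\bar\alpha)=1 \iff f(\psi,\bar\alpha\frown\bar\beta)=1$ for some assignment $\bar\beta$; together with the routine clauses for term substitution (Definition \ref{genfragment}(3)). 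A line-by-line inspection shows each clause is either open or a $(<\mu)$-sized intersection of clopen sets: the witnessing half of (iv) and the ``generates only true facts'' half are unions of clopen sets, hence open, while the converse halves of (iii) and (iv), and clause (i), are intersections of at most $\mu$ clopen sets. The hard part will be exactly this bookkeeping, since the axioms of $T^*$ carry $\kappa$-ary quantifiers and $(<\mu)$-sized conjunctions; these are absorbed because an arbitrary union of open sets is open and because $\mu^{<\mu}=\mu$ keeps both $|\mathcal F|$ and the number of clauses $\le\mu$. With at most $\mu$ clauses in total, $\mathrm{Im}\,\Phi$ is a $\mu$-sized intersection of open sets; a standard induction on $\mathcal F$-complexity then confirms that every such $f$ equals $\Phi(M^*)$ for the structure it codes. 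By the subspace theorem, $\mathrm{Im}\,\Phi$, and hence $(Mod_\kappa(\tau^*),T_{\mathcal F})$, is $G$-Polish.

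Finally I would cut down to $\mathbf K_\kappa$. By the Relational Presentation Theorem (\ref{prth}), the $\tau$-reducts of the models of $T^*$ are exactly the members of $\mathbf K$ and each $M\in\mathbf K_\kappa$ has a unique such expansion, which is the bijection invoked in the statement. Since $T^*\subseteq\mathcal F$ is a set of at most $\mu$ sentences, each $\{M^*:M^*\models\psi\}$ with $\psi\in T^*$ is $T_{\mathcal F}$-clopen (its complement is the basic set for $\neg\psi\in\mathcal F$), so $\mathbf K_\kappa=\bigcap_{\psi\in T^*}\{M^*:M^*\models\psi\}$ is a $\mu$-sized intersection of clopen sets, in particular closed in $(Mod_\kappa(\tau^*),T_{\mathcal F})$. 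As a closed subspace of a $G$-Polish space, $(\mathbf K_\kappa,T_{\mathcal F})$ is $G$-Polish, as required.
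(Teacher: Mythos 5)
Your proposal is correct and follows essentially the same route as the paper's proof: you embed $Mod_\kappa(\tau^*)$ into a generalized Cantor space indexed by $\mu$-many formula-plus-parameters (your pairs $(\phi,\bar\alpha)$ are just the paper's Henkin sentences $\phi(c_{i_1},c_{i_2},\dots)$ over the added constants $C=\{c_i : i<\kappa\}$), characterize the image by Tarski/Henkin clauses as a $\mu$-sized intersection of open sets, verify the map is a homeomorphism, and then cut down to $\mathbf{K}_\kappa$ by intersecting with the (clopen, by closure of $\mathcal{F}$ under $\neg$) sets of models of the sentences of $T^*$. The only differences are cosmetic --- your explicit equality clause (i) and your observation that $\mathbf{K}_\kappa$ is in fact closed, where the paper only records that it is $G^{\mu}_{\delta}$.
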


    \begin{proof}
        First, we observe that $|T^{*}| = I(\mathbf{K}, \kappa) + \kappa \leq 2^\kappa=\mu$. Then, we need to bound the size of the fragment $\mathcal{F}$ generated by $T^*$. 
        
        For that purpose observe that since there are $\kappa^{+}$-many variables and at most $|\tau| \leq \kappa$-many constants, the atomic formulas are at most $(\kappa^{+})^{\kappa} = 2^{\kappa} = \mu$-many. Furthermore, the subformulas of sentences in $T^{*}$ and the formulas produced by substituting variables with terms are at most $2^{\kappa}$ by an easy computation. It is also easy to see that the rest of the closure properties of Definition \ref{genfragment} do not produce more than $\mu$-many new formulas. Notice here that the hypothesis $\mu^{< \mu} = \mu$ is needed when dealing with closure property (6). So, we finally have that $|\mathcal{F}| = \mu$.

        Let $C = \{ c_{i} | i < \kappa \}$ be a set of new constants and let $\hat{\tau}^{*} = \tau^{*} \cup C$, where $\tau^{*} \supseteq \tau$ is the extended vocabulary from the Relational Presentation Theorem. Define $\hat{\mathcal{F}}$ to be the fragment generated by $\mathcal{F}\cup C$ in $\mathcal{L}_{\mu^{+}, \kappa^{+}}(\hat{\tau}^{*})$ and define $S$ to be the set of all sentences in $\hat{\mathcal{F}}$. 
        
        First we observe that since we just added $\kappa$-many constant symbols, the computations are the same for $\hat{\mathcal{F}}$ as they were for $\mathcal{F}$. So $|\hat{\mathcal{F}}| = |\mathcal{F}| = \mu$. 
        
        We can also see that $|S| = \mu$ (the computations are again the same with the difference that we do not have $\kappa^{+}$-many variables,  but we have exactly $\kappa$-many constants, so the atomic formulas are $\kappa^{\kappa} = \mu$-many), 
        and therefore $2^{S} = \{ 0, 1 \}^{S}$ is a $G$-$Polish$ space.

        Let $B$ be the set of all functions $f \in 2^{S}$ satisfying the following properties:

        \begin{enumerate}
            \item any finite subset of $\{ \phi | f(\phi) = 1 \}$ is consistent
            \item for all $\phi$, we have that $f(\phi) = 0$ iff $f(\neg \phi) = 1$
            \item $f(\bigvee_{i < \mu} \phi_{i}) = 1$ iff there is some $i$ with $f(\phi_{i}) = 1$
            \item for all $\phi$, we have $f(\exists \bar{x} \phi(\bar{x})) = 1$ iff there is some $\bar{c} \in C$ with $f(\phi(\bar{c})) = 1$ (the length of $\bar{x}$ could be infinite).
        \end{enumerate}

        \underline{Claim:} $B$ is a $G_{\delta}^{\mu}$ subset of $2^{S}$.

        \underline{Proof of claim:} It suffices to show that the conditions (1)-(4) correspond to $G_{\delta}^{\mu}$ sets.

        (1) Let $[S]^{< \omega}$ be the set of all finite subsets of $S$ and $I \subseteq [S]^{< \omega}$ the set of all inconsistent finite subsets of $S$. 
        Then
        (1) corresponds to:

                \[\bigcap_{A \in I} \bigcup_{\psi \in A} \{ f | f(\psi) = 0 \},\]
        
        which is $G_{\delta}^{\mu}$, since with the topology we defined on the generalized Cantor space the sets $\{ f | f(\psi) = 0 \}$ are open and the size of the intersection is $|I| \leq \mu$. 

        \underline{Observation:} In fact the set $\{ f | f(\psi) = 0 \}$ is clopen, since its complement can be written as $\{ f | f(\psi) = 1 \}$, which is also open.

        (2) The second condition corresponds to the set:

            \[\bigcap_{\phi \in S} \{ f | f(\phi) = 0 \Leftrightarrow f(\neg \phi) = 1 \}.\]

        The sets $\{ f | f(\phi) = 0 \Leftrightarrow f(\neg \phi) = 1 \}$ are clopen, since for every $\phi$ the basic open sets $\{ f | f(\phi) = 0\}$ are clopen. Therefore, the desired set is $G_{\delta}^{\mu}$.

        (3) The third condition corresponds to the intersection of the following sets (one for each direction):

            \[\bigcap_{i < \mu} \{ f | f(\phi_{i}) = 1 \Rightarrow f(\bigvee_{i < \mu} \phi_{i}) = 1 \},\] 
            which is $G_{\delta}^{\mu}$ and\\
            \[\{ f | f(\bigvee_{i < \mu} \phi_{i}) = 0 \} \cup \bigcup_{i < \mu} \{ f | f(\phi_{i}) = 1 \},\] 
            which is open.

        (4) The fourth condition also corresponds to the intersection of two sets, one for each direction:

            \[\bigcap_{\phi \in S} \bigcap_{\bar{c} \in C} \{ f | f(\phi(\bar{c})) = 1 \Rightarrow f(\exists \bar{x} \phi(\bar{x})) = 1 \}, \]
            which is $G_{\delta}^{\mu}$ since the first intersection has size $\mu$ and the second intersection has size $\kappa$ and
        \[\bigcap_{\phi \in S} ( \{ f | f(\exists \bar{x} \phi(\bar{x})) = 0 \} \cup \bigcup_{\bar{c} \in C} \{ f | f(\phi(\bar{c})) = 1 \} ),\] which is also $G_{\delta}^{\mu}$.

        Thus, we have proven that $B$ is a $G$-Polish space.

        The next step is to prove that $Mod_{\kappa}(\tau^{*})$ is homeomorphic to $B$ and thus also a $G$-Polish space.
        We define the map:

        \begin{center}
            $e : Mod_{\kappa}(\tau^{*}) \to B$\\
            $M \mapsto T_{M}$,
        \end{center}
        as follows

        \begin{center}
            $T_{M}(\phi(c_{i_{1}}, c_{i_{2}}, ... )) = 1 \Leftrightarrow M \models \phi(i_{1}, i_{2}, ... )$
        \end{center}

        \underline{Claim:} $e$ is homeomorphism.

        \underline{1-1:} If $M_{1} \neq M_{2}$, there will be some atomic formula $\psi(x_{1}, x_{2}, ... )$ and $i_{1}, i_{2}, ... \in \kappa$ on which they disagree, whence $T_{M_{1}}$ and $T_{M_{2}}$ disagree on $\psi(c_{i_{1}}, c_{i_{2}}, ... )$.

        \underline{Onto:} Fix some $T \in B$. We define $M$ as follows:

        For any relation $R \in \tau^{*}$

        \begin{center}
            $M \models R(i_{1}, i_{2}, ...) \Leftrightarrow T(R(c_{i_{1}}, c_{i_{2}}, ...)) = 1$.
        \end{center}

        For any function $F \in \tau^{*}$

        \begin{center}
            $F^{M}(i_{1}, i_{2}, ...) = l \Leftrightarrow T(F(c_{i_{1}}, c_{i_{2}}, ...) = c_{l}) = 1$
        \end{center}
        (This is well defined. By (1) and (2) we have $T(\exists x (F(c_{i_{1}}, c_{i_{2}}, ...) = x)) = 1$ and then by (4) we have some witness $c_{l}$.)

        For any constant $c \in \tau^{*}$

        \begin{center}
            $c^{M} = i \Leftrightarrow T(c = c_{i}) = 1$
        \end{center} 
        (which is also well defined)

        Now we can easily prove 

        \begin{center}
            $M \models \phi(i_{1}, i_{2}, ...) \Leftrightarrow T(\phi(c_{i_{1}}, c_{i_{2}}, ...)) = 1$
        \end{center}
        by induction on $\phi \in \mathcal{F}$.

        \underline{$e$ is continuous:} If $G$ is is a basic subset of $B$, i.e. there is a $\lambda < \mu$ and $(\phi_{\alpha})_{\alpha < \lambda} \in \mathcal{F}$, such that the value $T(\phi_{\alpha})$ is the same for  every $T \in G$, we will prove that $e^{-1}(G)$ is an open set in $Mod(\tau^{*})$.
        
        $e^{-1}(G) = \{ M | M \models \bigwedge_{\alpha < \lambda} (\phi_{\alpha}^*) \}$, where $\phi_{\alpha}^*$ is either $\phi_{\alpha}$ or $\neg\phi_\alpha$ depending on whether the value of $T(\phi_\alpha)$ is $1$ or $0$ respectively. It follows that $e^{-1}(G)$ is an open set.

        \underline{Inverse function is continuous:} If $\{ M | M \models \phi(i_{1}, i_{2}, ...) \}$ is a basic set in $Mod_{\kappa}(\tau^{*})$, for some $\phi \in \mathcal{F}$ and $i_{1}, i_{2}, ... \in \kappa$,  then its image through $e$ is $\{ T \in B | T(\phi(c_{i_{1}}, c_{i_{2}}, ...) = 1 \}$, which is an open set in $B$. 
        
        This proves the claim.

        Finally, for $\mathbf{K}_\kappa$, we have:
        
            \[\mathbf{K}_{\kappa} = \bigcap_{\sigma \in T^{*}} \{ M|_{\tau} : M \in Mod_{\kappa}(\tau^{*}) \text{ and } M \models \sigma \}.\]
       
        This completes the proof, since \[\bigcap_{\sigma \in T^{*}} \{ M : M \in Mod_{\kappa}(\tau^{*}) \text{ and }M \models \sigma \}\] 
        is a $G_{\delta}^{\mu}$ subset of $Mod_{\kappa}(\tau^{*})$ and there is a bijection between the models of $\mathbf{K}$ and their expansions that model $T^{*}$.
    \end{proof}

\section{Generalization to Higher Cardinalities}\label{sectionGentopologyAEC}

In this section we observe that the main Theorem \ref{THMtopology} generalizes to higher cardinalities. In particular, we define a topology on models of $\mathbf{K}$ of size $\lambda > \kappa$  and prove that this is also a Generalized Polish Space. Throughout this section we assume that $\lambda > \kappa$ and that for $\mu = (\lambda + \kappa^{+})^{\kappa} = \lambda^{\kappa}$, it holds $\mu^{<\mu}=\mu$.
In addition, we  assume that the universe of all models of size $\lambda$, is $\lambda$. 

We now define the  topology:

    \begin{defin}\label{gentopology}
        For $\mathcal{F} \subseteq \mathcal{L}_{\mu^{+}, \kappa^{+}}(\tau)$ a fragment we let $T_{\mathcal{F}}$ be the $\mathbf{topology}$ on $Mod_{\lambda}(\tau)$, the space of $\tau$-structures on $\lambda$, with the following basic sets:

        \begin{center}
            $\{ M | M \models \phi(\alpha_{1}, \alpha_{2}, ...)\}$,
        \end{center}

        for all $\phi(\bar{x}) \in \mathcal{F}$ and $\bar{\alpha} = (\alpha_{1}, \alpha_{2}, ...) \in \lambda$. 
    \end{defin}

    \begin{thm}\label{genTHMtopology}
        Let $\mathbf{K}$ be an AEC in a vocabulary $\tau$. If $\mathcal{F}$ is the minimum fragment of $\mathcal{L}_{\mu^{+}, \kappa^{+}}(\tau^{*})$ that contains the theory $T^{*}$ from the Relational Presentation Theorem, then $(\mathbf{K}_{\lambda}, T_{\mathcal{F}})$ is a $G$-$Polish$ space.\footnote{The same comment  as in Theorem \ref{THMtopology} applies here too:   $T_{\mathcal{F}}$ cannot be defined on $\mathbf{K}_{\lambda}$, since the models in $\mathbf{K}$ are $\tau$-structures and $T_{\mathcal{F}}$ is defined on $\tau^{*}$-structures. But we can bypass this fact since there is a bijection between the models in $\mathbf{K}$ and their expansions in the relational presentation theorem.}
    \end{thm}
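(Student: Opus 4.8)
The plan is to follow the proof of Theorem \ref{THMtopology} essentially verbatim, tracking the two places where $\lambda > \kappa$ alters the bookkeeping: the universe now has size $\lambda$, and consequently we must name its elements with $\lambda$-many constants rather than $\kappa$-many. Throughout, the cardinal playing the role of the ``base'' cardinal in the $G$-Polish theory is $\mu = (\lambda + \kappa^{+})^{\kappa} = \lambda^{\kappa}$, and every occurrence of $G_{\delta}^{\kappa}$ or weight $\kappa$ from Section \ref{sectionGPS} is to be read with $\mu$ in place of $\kappa$ (so $\mathrm{Deg}(G) = \mu$). Crucially, the theory $T^{*}$ supplied by the Relational Presentation Theorem is \emph{unchanged}: its predicates are still indexed by isomorphism types of models of size $\kappa = LS(\mathbf{K})$, and parts (1)--(2) of Theorem \ref{prth} hold for models of \emph{every} cardinality. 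Hence $M^{*} \models T^{*} \Leftrightarrow M^{*}|_{\tau} \in \mathbf{K}$ with a unique expansion, and restricting to universe $\lambda$ gives the correspondence $\mathbf{K}_{\lambda} \leftrightarrow \{ M^{*} \in Mod_{\lambda}(\tau^{*}) : M^{*} \models T^{*} \}$ that I will exploit at the end.

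First I would redo the cardinality estimates. As before $|T^{*}| = I(\mathbf{K}, \kappa) + \kappa \leq 2^{\kappa} \leq \mu$, and the minimum fragment $\mathcal{F}$ generated by $T^{*}$ over $\tau^{*}$ (with a set $V$ of $\kappa^{+}$ variables) has $|\mathcal{F}| = \mu$: the atomic formulas number at most $(\kappa^{+} + |\tau^{*}|)^{\kappa} \leq (2^{\kappa})^{\kappa} = 2^{\kappa} \leq \mu$, and the only closure operation that could push the count past $\mu$ is the formation of conjunctions of length $< \mu$, which is controlled precisely by the standing assumption $\mu^{<\mu} = \mu$. Next I introduce $C = \{ c_{i} : i < \lambda \}$, set $\hat{\tau}^{*} = \tau^{*} \cup C$, let $\hat{\mathcal{F}}$ be the fragment generated by $\mathcal{F} \cup C$, and let $S$ be its set of sentences. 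Here is the one computation that genuinely differs from Theorem \ref{THMtopology}: an atomic sentence applies a $\kappa$-ary predicate of $\tau^{*}$ to a $\kappa$-tuple of constants drawn from the pool $C$ of size $\lambda$, so there are $\lambda^{\kappa} = \mu$ atomic sentences; the remaining closure operations again stay at $\mu$ by $\mu^{<\mu} = \mu$, yielding $|S| = \mu$. Consequently $2^{S} \cong 2^{\mu}$ is $G$-Polish, which is exactly where $\mu^{<\mu} = \mu$ guarantees weight $\mu$.

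With these bounds in hand, the rest is a transcription. I would define $B \subseteq 2^{S}$ by the four consistency/coherence conditions (1)--(4) and verify, one condition at a time, that each corresponds to a $G_{\delta}^{\mu}$ set; the arguments are identical to those in Theorem \ref{THMtopology}, all indexing sets having size $\leq \mu$ so that $\mu$-sized intersections remain $G_{\delta}^{\mu}$. I would then define $e : Mod_{\lambda}(\tau^{*}) \to B$ by $T_{M}(\phi(c_{i_{1}}, c_{i_{2}}, \dots)) = 1 \Leftrightarrow M \models \phi(i_{1}, i_{2}, \dots)$, now with $i_{j} < \lambda$, and check that $e$ is a homeomorphism exactly as before: injectivity from disagreement on an atomic formula, surjectivity by reading off the relations, functions and constants of a structure on $\lambda$ from a given $T \in B$ (using conditions (1), (2), (4) for well-definedness), and continuity in both directions from the clopen generators. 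Finally, since each $\sigma \in T^{*}$ is a sentence lying in $\mathcal{F}$, the set $\{ M \in Mod_{\lambda}(\tau^{*}) : M \models \sigma \}$ is basic open, so
\[
\{ M^{*} \in Mod_{\lambda}(\tau^{*}) : M^{*} \models T^{*} \} = \bigcap_{\sigma \in T^{*}} \{ M^{*} : M^{*} \models \sigma \}
\]
is an intersection of at most $\mu$ open sets, hence $G_{\delta}^{\mu}$; transporting it through $e$ and invoking the characterization that $G_{\delta}^{\mu}$ subspaces of $G$-Polish spaces are $G$-Polish, together with the bijection $\mathbf{K}_{\lambda} \leftrightarrow \{ M^{*} : M^{*} \models T^{*} \}$ from Theorem \ref{prth}, identifies $(\mathbf{K}_{\lambda}, T_{\mathcal{F}})$ as $G$-Polish.

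The main obstacle is not conceptual but arithmetical: one must be confident that passing from $\kappa$ to $\lambda$ constants does not inflate $|S|$ beyond $\mu$. This is exactly the step $\lambda^{\kappa} = \mu$, and it is the reason the theorem is phrased with $\mu = (\lambda + \kappa^{+})^{\kappa}$ and the hypothesis $\mu^{<\mu} = \mu$ — the latter being precisely what keeps the fragment, the sentence set $S$, and all the $G_{\delta}^{\mu}$ computations from escaping the cardinal $\mu$.
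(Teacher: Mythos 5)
Your proposal is correct and follows exactly the route the paper takes: the paper's own proof of Theorem \ref{genTHMtopology} is just the observation that one repeats the argument of Theorem \ref{THMtopology} with $\lambda$-many constants in place of $\kappa$-many, so that the count of atomic sentences becomes $\lambda^{\kappa}=\mu$ and everything else goes through unchanged. You have simply written out in full the details the paper leaves implicit, including the correct identification of $\lambda^{\kappa}=\mu$ as the one genuinely new computation.
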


    \begin{proof}
The proof of this Theorem is similar to the proof of Theorem \ref{THMtopology}. We notice only a couple of differences between the two proofs.

Here we add to $\tau^*$ the set $C = \{ c_{i} | i < \lambda \}$ of $\lambda$-many new constants and define $\hat{\tau}^{*} = \tau^{*} \cup C$. We define $\mathcal{F}$, $\hat{\mathcal{F}}$ and $S$, as we did in Theorem \ref{THMtopology}. With similar computations as in the initial theorem, we can see that $|S| = \mu$ and that means that $2^{S} = \{0, 1\}^{S}$ is a $G$-Polish space.

The rest of the proof, if we replace $2^{\kappa}$ with $\lambda^{\kappa}$, is the same and we do not need to add anything new.
    \end{proof}

\section{Closing Remarks}\label{sClRem}

Since the models of a first order theory, equipped with the elementary substructure relation, form an AEC with countable L$\ddot{o}$wenheim-Skolem number, it comes natural to ask whether the results in this paper generalize the already known results presented in the introduction, namely that the countable models of a first order theory form a Polish Space. 

The answer to this question is negative. In the first-order case, we do not need to extend the vocabulary and appeal to a Presentation Theorem. The space of the countable models is homeomorphic to a subspace of $2^{\omega}$, which is a Polish Space. If we study the countable models of a first order theory in the terms of this paper, we first  extend the language and then we use the Relational Presentation Theorem. This results in  regarding countable models as models of an $\mathcal{L}_{\left(2^{\aleph_{0}}\right)^{+}, \aleph_{1}}$-theory and the space of countable models in the extended language is homeomorphic to a subspace of $2^{2^{\omega}}$, which is a $G$-Polish Space.

An open question is if we can derive similar results to this paper by using a Presentation Theorem different than the Baldwin-Boney Presentation Theorem. Since there is a plethora of other Presentation Theorems, it is interesting to ask if we can define a different topology on the models of an AEC which also gives rise to a $G$-Polish space.
 
Finally, this paper is a first attempt to generalize certain notions from first-order logic to AEC's. One natural question is if we can generalize notions like the atomic models, or define a topology on the models of an AEC so that the model-theoretic properties are closely connected with the topological properties, as it happens in the first-order case. We do not know the answer to this, but we see this paper as part of a larger project. 

\bibliographystyle{plain} 
\bibliography{bibliography.bib}

\end{document}